\patchcmd\Gread@eps{\@inputcheck#1 }{\@inputcheck"#1"\relax}{}{}
\newtheorem{theorem}{Theorem}[section]
\newtheorem{proposition}[theorem]{Proposition}
\newtheorem{corollary}[theorem]{Corollary}
\newtheorem{remark}[theorem]{Remark}
\newcommand{\proof}{\noindent{\bf Proof.\ }}
\newcommand{\qed}{\hfill $\square$\medskip}
\newcommand{\gst}{\gamma_{\rm st}}
\newcommand{\gw}{\gamma_{\rm w}}
\newcommand{\dst}{d_{\rm st}}
\begin{document}

\title{Strong domatic number of a graph}

\author{ Nima Ghanbari$^{1}$ \and Saeid Alikhani$^{2,}$\footnote{Corresponding author}
	}

\date{\today}

\maketitle

\begin{center}

$^1$Department of Informatics, University of Bergen, P.O. Box 7803, 5020 Bergen, Norway\\	
$^2$Department of Mathematical Sciences, Yazd University, 89195-741, Yazd, Iran\\

\medskip
\medskip
{\tt  ~~
$^{1}$Nima.Ghanbari@uib.no  ~~~  $^{2}$alikhani@yazd.ac.ir 
 }

\end{center}

\begin{abstract}
  A set $D$ of vertices of a simple graph $G=(V,E)$ is a {strong dominating set}, if for every vertex $x\in  \overline{D}=V\setminus D$ there is a vertex $y\in D$ with $xy\in E(G)$ and $\deg(x)\leq \deg(y)$. The {strong domination number} $\gst(G)$ is defined as the minimum cardinality of a strong dominating set. The strong domatic number of  $G$
 is the maximum number of strong  dominating sets into which the vertex set of $G$ can be partitioned. We initiate the study of the strong domatic number, and we present different sharp bounds on $d_{st}(G)$. In addition, we determine this parameter for some classes of graphs, such as cubic graphs of order at most $10$.
 
\end{abstract}

\noindent{\bf Keywords:}  strong domination number; strong domatic number; cubic.

\medskip
\noindent{\bf AMS Subj.\ Class.}:  05C69.

\section{Introduction}

The various different domination concepts are
well-studied now, however new concepts are introduced frequently and the interest is growing
rapidly. We recommend three fundamental books \cite{domination,2} and some surveys \cite{6,5} about
domination in general. A set $D\subseteq V$ is a \emph{strong dominating set} of a simple graph  $G=(V,E)$, if for every vertex $x\in  \overline{D}=V\setminus D$ there is a vertex $y\in D$ with $xy\in E(G)$ and $\deg(x)\leq \deg(y)$. The \emph{strong domination number} $\gst(G)$ is defined as the minimum cardinality of a strong dominating set. A $\gst$-\emph{set} of $G$ is a strong dominating set of $G$ of minimum cardinality $\gst(G)$. If $D$ is a strong dominating set in a graph $G$, then we say that a vertex $u \in \overline{D}$ is \emph{strong dominated} by a vertex $v \in D$ if $uv\in E(G)$, and $\deg(u)\leq \deg(v)$.

In 1996, Sampathkumar and Pushpa Latha \cite{DM} introduced strong domination number and some upper bounds on this parameter presented in \cite{DM2,DM}. Similar to strong domination number, a set $D\subset V$  is a weak  dominating set of $G$, if every vertex $v\in V\setminus S$  is
adjacent to a vertex $u\in D$ such that $deg(v)\geq deg(u)$ (see \cite{Boutrig}). The minimum cardinality of a weak dominating set of $G$ is denoted by $\gw(G)$. Boutrig and  Chellali proved that for any graph $G$ of order $n\geq 3$, $\gw(G)+\frac{3}{\Delta+1}\gst(G)\leq n$. Alikhani, Ghanbari and Zaherifard \cite{sub} examined the effects on $\gamma_{st}(G)$, when $G$ is modified by the edge deletion, the edge subdivision and the edge contraction. Also they studied the strong domination number of $k$-subdivision of $G$.   
Motivated by enumerating of  the number of dominating sets of a graph and domination polynomial (see e.g. \cite{euro}), the enumeration of  the strong dominating sets for certain
graphs has studied in \cite{JAS}. 
Study of the strong domination number of graph operations are  natural and interesting subject and for join and corona products have studied in \cite{JAS}.
A domatic partition is a partition
of the vertex set into dominating sets, in other words, a partition $\pi$ = $\{V_1, V_2, . . . , V_k \}$ of $V(G)$  
such that every set $V_{i}$ is a dominating set in $G$. 
Cockayne and Hedetniemi \cite{Cockayne} introduced the domatic number of a graph $d(G)$ as the maximum order $k$ of a vertex partition. For more details on the domatic number refer to e.g., \cite{11,12,13}. 

Aram, Sheikholeslami and Volkmann in \cite{TOC} have shown that the total domatic number of a random $r$-regular graph is almost surely at most $r-1$, 
and that for $3$-regular random graphs, the total domatic number is almost surely equal to $2$. They also have given a lower bound on the total domatic number of a graph in terms of order, minimum degree and maximum degree.

Motivated by the definition of the domatic number and total domatic number, we focus on studying strong domatic number of a graph.

		A partition of $V(G)$, all of whose classes are strong dominating sets in $G$, is called a \textit{strong domatic partition} of $G$. The maximum number of classes of a strong domatic partition of $G$ is called the \textit{strong
			domatic number} of $G$ and is denoted by $\dst (G)$.

In Section 2, we compute and study the strong domatic number for certain graphs and we present different sharp bounds on $d_{st}(G)$. 
In Section 3,  we determine this parameter for all cubic graphs of order at most $10$.

\section{Results for certain graphs}

In this section, we study the strong domatic  number for certain graphs. First we state and prove the following theorem for graphs $G$ with $\delta(G)=1$.

	\begin{theorem}\label{thm:pendant}
If a graph $G$ has a pendant vertex, then $\dst (G)=1$ or $\dst (G)=2$.
	\end{theorem}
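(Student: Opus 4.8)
The plan is to prove the two inequalities $\dst(G)\ge 1$ and $\dst(G)\le 2$ separately; together they force $\dst(G)\in\{1,2\}$. The lower bound is immediate and I would dispatch it first: the whole vertex set $V(G)$ is itself a strong dominating set, since $\overline{V(G)}=\emptyset$ makes the defining condition vacuously true. Thus the trivial partition $\{V(G)\}$ is a strong domatic partition, giving $\dst(G)\ge 1$.

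The substance of the argument is the upper bound, and here I would use a simple pigeonhole observation on the neighbor of the pendant vertex. Let $v$ be a pendant vertex of $G$ and let $u$ be its unique neighbor, so that $\deg(v)=1$. Consider any strong domatic partition $\pi=\{V_1,\dots,V_k\}$, and suppose $v\in V_1$. For every index $j\neq 1$ we have $v\notin V_j$, so $v$ must be strong dominated by some vertex of $V_j$; but the only neighbor of $v$ is $u$ (and $\deg(v)=1\le\deg(u)$ holds automatically), hence $u\in V_j$.

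Since $u$ belongs to exactly one class of $\pi$, at most one index $j$ can differ from $1$, and therefore $k\le 2$. As $\pi$ was an arbitrary strong domatic partition, this yields $\dst(G)\le 2$, and combining with the lower bound completes the proof. I do not expect a genuine obstacle here: the entire argument hinges on the single observation that every part other than the one containing $v$ is forced to contain the unique neighbor $u$, which caps the number of parts at two. The only points requiring a little care are verifying that $V(G)$ qualifies as a strong dominating set (for the lower bound) and noting that the degree condition $\deg(v)\le\deg(u)$ is never an obstruction when $v$ has degree $1$.
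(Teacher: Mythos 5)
Your proof is correct and rests on the same key observation as the paper's: every class of the partition that does not contain the pendant vertex $v$ is forced to contain its unique neighbour $u$, so at most two classes are possible. Your presentation is a slightly streamlined version of the paper's case analysis (which separately treats the case where $v$ and $u$ lie in the same class), but the underlying argument is identical.
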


	\begin{proof}
Suppose that $u$ is a pendant vertex $u$,  $N(u)=\{v\}$ and  $P$ is  a strong domatic partition of $G$. We claim than $|P|\leq 2$. Since $\deg(u)=1$, so in any strong dominating set of $G$, say $D$, we should have either $u\in D$ or $v\in D$ or $\{u,v\}\subseteq D$. If  $\{u,v\}\subseteq D$, then by the definition of the strong dominating set and the strong domatic partition, we should have $D=V(G)$, and $P=\{D\}$. Because if we have $D'\in P$ such that $D'\neq D$, then no vertex strong dominate  $u$ which is a contradiction. The other case is $u\in D$ or $v\in D$ and not both, which in the best case gives us two strong dominating sets. Therefore we have the result. 
\qed
	\end{proof}

The following result gives bounds for the strong domatic number based on the number of vertices with maximum degree.

	\begin{theorem}\label{thm:max-degree}
Let $G$ be a graph with maximum degree $\Delta$ and $m$ be the number of vertices with degree $\Delta$. Then $1\leq \dst (G)\leq m$.
	\end{theorem}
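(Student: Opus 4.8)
The plan is to establish the two inequalities separately. The lower bound is essentially free, while the upper bound rests on a single structural observation about how maximum-degree vertices can be strong dominated, followed by a disjointness/counting argument.

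For the lower bound, I would simply note that the trivial partition $\{V(G)\}$ is always a strong domatic partition: taking $D=V(G)$ leaves $\overline{D}=\emptyset$, so the defining condition of a strong dominating set holds vacuously. Hence $\dst(G)\geq 1$ for every graph $G$.

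For the upper bound, the key observation I would isolate first is that a vertex $x$ with $\deg(x)=\Delta$ can be strong dominated \emph{only} by a vertex of degree $\Delta$. Indeed, if some $y$ strong dominates such an $x$, then the requirement $\deg(x)\leq \deg(y)$ forces $\Delta\leq\deg(y)\leq\Delta$, so $\deg(y)=\Delta$. I would then fix one vertex $w$ of maximum degree (which exists by the definition of $\Delta$) and argue as follows: let $\pi=\{V_1,\dots,V_k\}$ be any strong domatic partition, and suppose for contradiction that some class $V_i$ contains no vertex of degree $\Delta$. Since $w\notin V_i$, we have $w\in\overline{V_i}$, and because $V_i$ is a strong dominating set, $w$ must be strong dominated by some $y\in V_i$; by the observation $\deg(y)=\Delta$, contradicting that $V_i$ has no maximum-degree vertex.

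Consequently every class of $\pi$ contains at least one vertex of degree $\Delta$. As the classes are pairwise disjoint and there are exactly $m$ vertices of degree $\Delta$ in total, we obtain $k\leq m$, whence $\dst(G)\leq m$. The one point to state carefully is the degree comparison inside the strong-domination condition, namely that $\deg(x)\leq\deg(y)$ together with the maximality of $\Delta$ pins $y$ down to maximum degree; once this is in place, the remaining disjointness and counting steps are routine, so I do not anticipate any genuine obstacle beyond phrasing that observation cleanly.
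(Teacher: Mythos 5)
Your argument is correct and matches the paper's proof in its essential idea: a vertex of maximum degree can only be strong dominated by another vertex of maximum degree, so every class of a strong domatic partition must contain a vertex of degree $\Delta$, and disjointness gives the bound $m$. You have merely written out the counting step and the trivial lower bound more explicitly than the paper does.
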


	\begin{proof}
Since any vertex with degree $\Delta$ should be in a strong dominating set or strong dominated by another vertex with degree $\Delta$, so the maximum number of sets which are strong dominating sets and a partition of $V(G)$ is $m$, and we are done. 
\qed
	\end{proof}

	\begin{remark}\label{rem:star-complete}
Bounds in Theorem \ref{thm:max-degree} are tight. For the lower bound, it suffices to consider the star graph $K_{1,n}$. Since we only have one vertex with maximum degree, then all of vertices should be in  strong dominating set, and we have $\dst (K_{1,n})=1$. For the upper bound, it suffices to consider complete graph $K_n$. Since a single vertex is a strong dominating set, so we have  $\dst (K_{n})=n$, and we are done.
	\end{remark}

We need the following result to obtain more results:

	\begin{theorem}{\rm\cite{Cockayne}}\label{thm:domatic-min-deg}
For any graph $G$, $d(G)\leq \delta +1$, where $\delta$ is the minimum degree, and $d(G)$ is the domatic number of $G$. 
	\end{theorem}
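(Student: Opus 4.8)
The plan is to prove this classical bound by a direct counting argument anchored at a vertex of minimum degree. First I would fix a vertex $v$ with $\deg(v)=\delta$, and let $\pi=\{V_1,V_2,\dots,V_k\}$ be an arbitrary domatic partition of $G$; thus every class $V_i$ is a dominating set and $k$ is the number of classes. Since $\pi$ is a partition of $V(G)$, the vertex $v$ belongs to exactly one class, say $V_1$.

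The key step is to observe that each of the remaining $k-1$ classes must contain a neighbor of $v$. Indeed, for each $i\geq 2$ we have $v\notin V_i$, and since $V_i$ is a dominating set, $v$ must be dominated by some vertex of $V_i$, which is necessarily adjacent to $v$. Because the classes $V_2,\dots,V_k$ are pairwise disjoint, the dominating neighbors selected from them are all distinct, so $v$ has at least $k-1$ distinct neighbors. Hence $k-1\leq \deg(v)=\delta$, that is, $k\leq \delta+1$. Since this inequality holds for every domatic partition, taking the maximum over all such partitions gives $d(G)\leq \delta+1$, as claimed.

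I expect no substantial obstacle in this proof: the whole argument is a single pigeonhole count at one well-chosen vertex. The only point requiring a moment's care is the disjointness of the classes, which is precisely what guarantees that the neighbors chosen from distinct classes cannot coincide, so that they genuinely contribute $k-1$ distinct vertices to the neighborhood of $v$.
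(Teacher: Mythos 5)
Your argument is correct: anchoring at a minimum-degree vertex $v$, noting that each of the $k-1$ classes not containing $v$ must supply a distinct neighbor of $v$, and concluding $k\leq\delta+1$ is exactly the classical proof of this bound. The paper itself states this theorem without proof, citing Cockayne and Hedetniemi, so there is nothing to compare against; your write-up is a complete and standard justification of the quoted result.
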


Since in every regular graph, all vertices have the same degree, so each dominating set of a graph is a strong dominating set, too. Therefore, by Theorem \ref{thm:domatic-min-deg} we have the following result.

	\begin{corollary}\label{cor:strong-domatic-min-deg}
For any $k$-regular graph $G$, $d(G)=\dst(G)$ and $\dst(G)\leq k +1$. 
	\end{corollary}

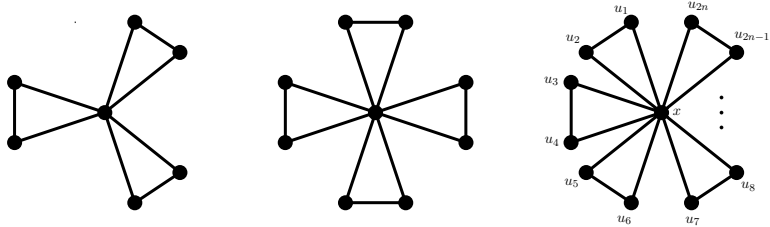
\begin{figure}
\begin{center}
\psscalebox{0.5 0.5}
{
\begin{pspicture}(0,-7.215)(20.277115,-1.245)
\psline[linecolor=black, linewidth=0.04](1.7971154,-1.815)(1.7971154,-1.815)
\psdots[linecolor=black, dotsize=0.4](8.997115,-1.815)
\psdots[linecolor=black, dotsize=0.4](10.5971155,-1.815)
\psdots[linecolor=black, dotsize=0.4](9.797115,-4.215)
\psdots[linecolor=black, dotsize=0.4](8.997115,-6.615)
\psdots[linecolor=black, dotsize=0.4](10.5971155,-6.615)
\psline[linecolor=black, linewidth=0.08](8.997115,-1.815)(10.5971155,-1.815)(8.997115,-6.615)(10.5971155,-6.615)(8.997115,-1.815)(8.997115,-1.815)
\psdots[linecolor=black, dotsize=0.4](12.197115,-3.415)
\psdots[linecolor=black, dotsize=0.4](12.197115,-5.015)
\psdots[linecolor=black, dotsize=0.4](7.397115,-3.415)
\psdots[linecolor=black, dotsize=0.4](7.397115,-5.015)
\psline[linecolor=black, linewidth=0.08](12.197115,-5.015)(7.397115,-3.415)(7.397115,-5.015)(12.197115,-3.415)(12.197115,-5.015)(12.197115,-5.015)
\psdots[linecolor=black, dotsize=0.4](0.1971154,-3.415)
\psdots[linecolor=black, dotsize=0.4](0.1971154,-5.015)
\psdots[linecolor=black, dotsize=0.4](2.5971155,-4.215)
\psline[linecolor=black, linewidth=0.08](2.5971155,-4.215)(0.1971154,-3.415)(0.1971154,-5.015)(2.5971155,-4.215)(2.5971155,-4.215)
\psdots[linecolor=black, dotsize=0.4](3.3971155,-1.815)
\psdots[linecolor=black, dotsize=0.4](4.5971155,-2.615)
\psdots[linecolor=black, dotsize=0.4](3.3971155,-6.615)
\psdots[linecolor=black, dotsize=0.4](4.5971155,-5.815)
\psline[linecolor=black, linewidth=0.08](2.5971155,-4.215)(4.5971155,-5.815)(3.3971155,-6.615)(3.3971155,-6.615)
\psline[linecolor=black, linewidth=0.08](2.5971155,-4.215)(3.3971155,-6.615)(3.3971155,-6.615)
\psline[linecolor=black, linewidth=0.08](2.5971155,-4.215)(3.3971155,-1.815)(4.5971155,-2.615)(2.5971155,-4.215)(2.5971155,-4.215)
\psdots[linecolor=black, dotsize=0.4](15.397116,-2.615)
\psdots[linecolor=black, dotsize=0.4](16.597115,-1.815)
\psdots[linecolor=black, dotsize=0.4](17.397116,-4.215)
\psdots[linecolor=black, dotsize=0.4](15.397116,-5.815)
\psdots[linecolor=black, dotsize=0.4](16.597115,-6.615)
\psdots[linecolor=black, dotsize=0.4](19.397116,-5.815)
\psdots[linecolor=black, dotsize=0.4](18.197115,-6.615)
\psdots[linecolor=black, dotsize=0.4](14.997115,-3.415)
\psdots[linecolor=black, dotsize=0.4](14.997115,-5.015)
\psdots[linecolor=black, dotsize=0.4](18.197115,-1.815)
\psdots[linecolor=black, dotsize=0.4](19.397116,-2.615)
\psdots[linecolor=black, dotsize=0.1](18.997116,-3.815)
\psdots[linecolor=black, dotsize=0.1](18.997116,-4.215)
\psdots[linecolor=black, dotsize=0.1](18.997116,-4.615)
\rput[bl](17.697115,-4.295){$x$}
\rput[bl](16.137115,-1.595){$u_1$}
\rput[bl](14.857116,-2.375){$u_2$}
\psline[linecolor=black, linewidth=0.08](17.397116,-4.215)(19.397116,-2.615)(18.197115,-1.815)(17.397116,-4.215)(16.597115,-1.815)(15.397116,-2.615)(17.397116,-4.215)(14.997115,-3.415)(14.997115,-5.015)(17.397116,-4.215)(15.397116,-5.815)(16.597115,-6.615)(17.397116,-4.215)(18.197115,-6.615)(19.397116,-5.815)(17.397116,-4.215)(17.397116,-4.215)
\rput[bl](14.277116,-3.495){$u_3$}
\rput[bl](14.297115,-5.095){$u_4$}
\rput[bl](14.817116,-6.195){$u_5$}
\rput[bl](16.217115,-7.175){$u_6$}
\rput[bl](18.037115,-7.215){$u_7$}
\rput[bl](19.517115,-6.295){$u_8$}
\rput[bl](18.097115,-1.495){$u_{2n}$}
\rput[bl](19.337116,-2.315){$u_{2n-1}$}
\end{pspicture}
}
\end{center}
\caption{Friendship graphs $F_3$, $F_4$ and $F_n$, respectively.}\label{fig:friend}
\end{figure}

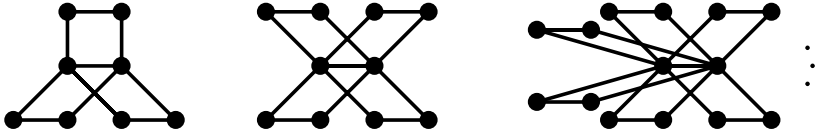
\begin{figure}
\begin{center}
\psscalebox{0.6 0.6}
{
\begin{pspicture}(0,-4.4)(17.953062,-1.605769)
\psdots[linecolor=black, dotsize=0.4](1.3971153,-3.0028846)
\psdots[linecolor=black, dotsize=0.4](2.5971153,-3.0028846)
\psdots[linecolor=black, dotsize=0.4](2.5971153,-1.8028846)
\psdots[linecolor=black, dotsize=0.4](1.3971153,-1.8028846)
\psdots[linecolor=black, dotsize=0.4](1.3971153,-4.2028847)
\psdots[linecolor=black, dotsize=0.4](0.19711533,-4.2028847)
\psdots[linecolor=black, dotsize=0.4](2.5971153,-4.2028847)
\psdots[linecolor=black, dotsize=0.4](3.7971153,-4.2028847)
\psline[linecolor=black, linewidth=0.08](2.5971153,-4.2028847)(3.7971153,-4.2028847)(2.5971153,-3.0028846)(1.3971153,-3.0028846)(2.5971153,-4.2028847)(1.3971153,-3.0028846)(1.3971153,-1.8028846)(2.5971153,-1.8028846)(2.5971153,-3.0028846)(1.3971153,-4.2028847)(0.19711533,-4.2028847)(1.3971153,-3.0028846)(1.3971153,-3.0028846)
\psline[linecolor=black, linewidth=0.08](5.7971153,-4.2028847)(6.997115,-4.2028847)(6.997115,-4.2028847)
\psdots[linecolor=black, dotsize=0.4](5.7971153,-4.2028847)
\psdots[linecolor=black, dotsize=0.4](6.997115,-4.2028847)
\psdots[linecolor=black, dotsize=0.4](6.997115,-3.0028846)
\psdots[linecolor=black, dotsize=0.4](8.197115,-3.0028846)
\psdots[linecolor=black, dotsize=0.4](8.197115,-4.2028847)
\psdots[linecolor=black, dotsize=0.4](9.397116,-4.2028847)
\psdots[linecolor=black, dotsize=0.4](8.197115,-1.8028846)
\psdots[linecolor=black, dotsize=0.4](9.397116,-1.8028846)
\psdots[linecolor=black, dotsize=0.4](6.997115,-1.8028846)
\psdots[linecolor=black, dotsize=0.4](5.7971153,-1.8028846)
\psline[linecolor=black, linewidth=0.08](5.7971153,-4.2028847)(6.997115,-3.0028846)(5.7971153,-1.8028846)(6.997115,-1.8028846)(8.197115,-3.0028846)(6.997115,-3.0028846)(8.197115,-1.8028846)(9.397116,-1.8028846)(8.197115,-3.0028846)(9.397116,-4.2028847)(8.197115,-4.2028847)(6.997115,-3.0028846)(8.197115,-3.0028846)(6.997115,-4.2028847)(6.997115,-4.2028847)
\psline[linecolor=black, linewidth=0.08](13.397116,-4.2028847)(14.5971155,-4.2028847)(14.5971155,-4.2028847)
\psdots[linecolor=black, dotsize=0.4](13.397116,-4.2028847)
\psdots[linecolor=black, dotsize=0.4](14.5971155,-4.2028847)
\psdots[linecolor=black, dotsize=0.4](14.5971155,-3.0028846)
\psdots[linecolor=black, dotsize=0.4](15.797115,-3.0028846)
\psdots[linecolor=black, dotsize=0.4](15.797115,-4.2028847)
\psdots[linecolor=black, dotsize=0.4](16.997116,-4.2028847)
\psdots[linecolor=black, dotsize=0.4](15.797115,-1.8028846)
\psdots[linecolor=black, dotsize=0.4](16.997116,-1.8028846)
\psdots[linecolor=black, dotsize=0.4](14.5971155,-1.8028846)
\psdots[linecolor=black, dotsize=0.4](13.397116,-1.8028846)
\psline[linecolor=black, linewidth=0.08](13.397116,-4.2028847)(14.5971155,-3.0028846)(13.397116,-1.8028846)(14.5971155,-1.8028846)(15.797115,-3.0028846)(14.5971155,-3.0028846)(15.797115,-1.8028846)(16.997116,-1.8028846)(15.797115,-3.0028846)(16.997116,-4.2028847)(15.797115,-4.2028847)(14.5971155,-3.0028846)(15.797115,-3.0028846)(14.5971155,-4.2028847)(14.5971155,-4.2028847)
\psdots[linecolor=black, dotsize=0.4](11.797115,-2.2028844)
\psdots[linecolor=black, dotsize=0.4](12.997115,-2.2028844)
\psdots[linecolor=black, dotsize=0.4](12.997115,-3.8028846)
\psdots[linecolor=black, dotsize=0.4](11.797115,-3.8028846)
\psline[linecolor=black, linewidth=0.08](11.797115,-2.2028844)(12.997115,-2.2028844)(11.797115,-2.2028844)(11.797115,-2.2028844)(11.797115,-2.2028844)
\psline[linecolor=black, linewidth=0.08](14.5971155,-3.0028846)(11.797115,-2.2028844)(11.797115,-2.2028844)
\psline[linecolor=black, linewidth=0.08](12.997115,-2.2028844)(15.797115,-3.0028846)(15.797115,-3.0028846)
\psline[linecolor=black, linewidth=0.08](14.5971155,-3.0028846)(11.797115,-3.8028846)(11.797115,-3.8028846)
\psline[linecolor=black, linewidth=0.08](11.797115,-3.8028846)(12.997115,-3.8028846)(15.797115,-3.0028846)(15.797115,-3.0028846)
\psdots[linecolor=black, dotsize=0.1](17.797115,-2.6028845)
\psdots[linecolor=black, dotsize=0.1](17.797115,-3.4028845)
\psdots[linecolor=black, dotsize=0.1](17.903782,-3.0028846)
\end{pspicture}
}
\end{center}
\caption{Book graph $B_3$, $B_4$ and $B_n$, respectively} \label{fig:book}
\end{figure}

The following result gives the strong domatic number of certain graphs:

	\begin{proposition}
	The following holds:
\begin{itemize}
\item[(i)]
For the path graph $P_n$, $n\geq 4$, we have $\dst(P_n)=2$.
\item[(ii)]
For the cycle graph $C_n$,
	\[
	\dst(C_n)=\left\{
	\begin{array}{ll}
	{\displaystyle
		3}&
	\quad\mbox{if $n=3k $,}\\[15pt]
	{\displaystyle
		2}&
	\quad\mbox{otherwise.}
	\end{array}
	\right.
	\]
\item[(iii)]
For the complete bipartite graph $K_{n,m}$,
	\[
	\dst(K_{n,m})=\left\{
	\begin{array}{ll}
	{\displaystyle
		1}&
	\quad\mbox{if $n<m $,}\\[15pt]
	{\displaystyle
		n}&
	\quad\mbox{if $n=m $.}
	\end{array}
	\right.
	\]
\item[(iv)]
	For the friendship graph $F_n$ (see Figure \ref{fig:friend}), $\dst(F_n)=1$.
\item[(v)]
	For the book graph $B_n$ (see Figure \ref{fig:book}), $\dst(B_n)=2$.	
\end{itemize}
	\end{proposition}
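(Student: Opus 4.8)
The plan is to combine the two general bounds already available---Theorem~\ref{thm:pendant} and Theorem~\ref{thm:max-degree}---with a few explicit partitions, and to route the regular families through Corollary~\ref{cor:strong-domatic-min-deg}, which identifies $\dst$ with the ordinary domatic number $d$. Two of the five items are then immediate. For the friendship graph $F_n$ (item (iv)) the unique center has degree $2n$ while every other vertex has degree $2$, so for $n\ge 2$ there is exactly $m=1$ vertex of maximum degree and Theorem~\ref{thm:max-degree} forces $\dst(F_n)=1$. For the cycle $C_n$ (item (ii)), which is $2$-regular, Corollary~\ref{cor:strong-domatic-min-deg} gives $\dst(C_n)=d(C_n)$, and I would simply quote the classical value of the domatic number of a cycle, namely $3$ when $3\mid n$ and $2$ otherwise.

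For the path $P_n$ with $n\ge 4$ (item (i)), the two endpoints are pendant, so Theorem~\ref{thm:pendant} already gives $\dst(P_n)\le 2$ and it remains to build a partition into two strong dominating sets. I would use the parity partition into odd- and even-indexed vertices. A pendant endpoint is always strong dominated by its unique neighbor, and an uncovered internal vertex of degree $2$ is strong dominated as soon as one of its two neighbors also has degree $2$; the only degree-$2$ vertex both of whose neighbors are pendant is the center of $P_3$, which the hypothesis $n\ge 4$ excludes. Hence both classes are strong dominating and $\dst(P_n)=2$. For the (triangular) book graph $B_n$ (item (v)) the two spine vertices $u,v$ have degree $n+1$ and every page has degree $2$, so for $n\ge 2$ exactly $m=2$ vertices attain the maximum degree and Theorem~\ref{thm:max-degree} yields $\dst(B_n)\le 2$. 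For the reverse inequality I would display the partition $\{u\}$, $V(B_n)\setminus\{u\}$: the class $\{u\}$ strong dominates $v$ (equal degrees) and every page (degree $2\le n+1$), while the complementary class contains $v$, the only candidate able to strong dominate the lone outside vertex $u$. Thus $\dst(B_n)=2$.

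The complete bipartite case (item (iii)) carries the real content. Write the parts as $A$ and $B$ with $|A|=n\le m=|B|$, so every vertex of $A$ has degree $m$ and every vertex of $B$ has degree $n$. The crux is the degree asymmetry: a vertex of $A$ sees only $B$-vertices, each of degree $n$, so it can be strong dominated only by a $B$-vertex of degree at least $m$. When $n<m$ this never happens, which forces all of $A$ into every strong dominating set; as $A$ cannot be split across two disjoint classes, $\dst(K_{n,m})=1$. When $n=m$ the graph is $n$-regular and Corollary~\ref{cor:strong-domatic-min-deg} reduces the task to computing $d(K_{n,n})$: since a class lying entirely in one side must exhaust that side (there are no edges inside a part), any partition into more than two classes must give each class a vertex from both sides, bounding the number of classes by $n$, while the partition into the pairs $\{a_i,b_i\}$ attains $n$.

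I expect the genuine obstacle to be this last upper bound $d(K_{n,n})\le n$, where one must exclude cleverly unbalanced partitions; the ``a one-sided class is the whole side'' observation is exactly what closes that gap. Everything else is either a direct appeal to Theorem~\ref{thm:pendant}, Theorem~\ref{thm:max-degree}, or Corollary~\ref{cor:strong-domatic-min-deg}, or a short check of an explicitly written partition.
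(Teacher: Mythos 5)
Most of your argument tracks the paper's own proof: items (i), (iii) and (iv) are handled exactly as in the paper (parity partition plus Theorem \ref{thm:pendant} for the path; ``all of the small side is forced into every class'' for $n<m$ and the pairing $\{u_i,v_i\}$ for $n=m$; the single max-degree vertex for $F_n$). For (ii) you take a mild shortcut: instead of the paper's computation $\gst(C_{3k+1})=k+1$ ruling out three disjoint strong dominating sets, you invoke $\dst(C_n)=d(C_n)$ from Corollary \ref{cor:strong-domatic-min-deg} and quote the classical domatic number of a cycle; that is legitimate and arguably cleaner, provided you are allowed to cite that classical value. Your upper bound $d(K_{n,n})\le n$ via ``a one-sided class must be the whole side'' is a slightly longer route than the paper's pigeonhole remark that more than $n$ classes would force a singleton class, which cannot strongly dominate in $K_{n,n}$; both are fine.

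The genuine gap is in item (v): you have the wrong book graph. The paper's $B_n$ (Figure \ref{fig:book}) is the \emph{quadrilateral} (stacked) book, i.e.\ $n$ copies of $C_4$ sharing a common edge $uv$, so each page contributes two degree-$2$ vertices $a_i\sim u$ and $b_i\sim v$ with $a_i\sim b_i$, and $u$ is \emph{not} adjacent to any $b_i$. Consequently your proposed class $\{u\}$ is not even a dominating set (it misses every $b_i$), and the partition $\bigl\{\{u\},V\setminus\{u\}\bigr\}$ fails. Your upper bound via Theorem \ref{thm:max-degree} (exactly two vertices of maximum degree $n+1$) still stands; to close the argument you need the paper's partition $D_1=\{u\}\cup N(v)=\{u,b_1,\dots,b_n\}$ and $D_2=\{v\}\cup N(u)=\{v,a_1,\dots,a_n\}$, where $u$ strongly dominates $v$ and all $a_i$, and $v$ strongly dominates $u$ and all $b_i$.
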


\begin{figure}
\begin{center}
\psscalebox{0.70 0.70}
{
\begin{pspicture}(0,-3.8135576)(6.4171157,-3.0464423)
\psdots[linecolor=black, dotsize=0.1](3.3971155,-3.6164422)
\psdots[linecolor=black, dotsize=0.1](3.7971156,-3.6164422)
\psdots[linecolor=black, dotsize=0.1](4.1971154,-3.6164422)
\psline[linecolor=black, linewidth=0.08](4.5971155,-3.6164422)(6.1971154,-3.6164422)(6.1971154,-3.6164422)
\psline[linecolor=black, linewidth=0.08](2.9971154,-3.6164422)(0.19711548,-3.6164422)
\psdots[linecolor=black, dotsize=0.4](0.19711548,-3.6164422)
\psdots[linecolor=black, dotsize=0.4](1.3971155,-3.6164422)
\psdots[linecolor=black, dotsize=0.4](2.5971155,-3.6164422)
\psdots[linecolor=black, dotsize=0.4](4.9971156,-3.6164422)
\psdots[linecolor=black, dotsize=0.4](6.1971154,-3.6164422)
\rput[bl](0.037115477,-3.3164423){$v_1$}
\rput[bl](1.2771155,-3.3564422){$v_2$}
\rput[bl](2.4371154,-3.3564422){$v_3$}
\rput[bl](6.0171156,-3.2964423){$v_n$}
\rput[bl](4.8371153,-3.3564422){$v_{n-1}$}
\end{pspicture}
}
\end{center}
\caption{\label{path} The path graph with $V(P_n)=\{v_1,v_2,\ldots,v_n\}$.} 
\end{figure}
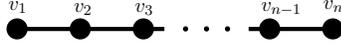

	\begin{proof}
\begin{itemize}
\item[(i)]
Suppose that $V(P_n)=\{v_1,v_2,\ldots,v_n\}$, and vertices are as in Figure \ref{path}. One can easily check that the set of vertices with even indices is a strong dominating set, and the set of vertices with odd indices is another strong dominating set. Therefore, by Theorem \ref{thm:pendant},  we have $\dst(P_n)=2$.

\item[(ii)]
Suppose that $V(C_n)=\{v_1,v_2,\ldots,v_n\}$, and vertices are in a natural order. We consider the following cases:
\begin{itemize}
\item[(a)]
$n=3k$. Let 
$$P=\Bigl\{ \{v_1,v_4,\ldots,v_{3k-2}\},\{v_2,v_5,\ldots,v_{3k-1}\},\{v_3,v_6,\ldots,v_{3k}\} \Bigl\}.$$
Clearly $P$ is a strong domatic partition of $C_{3k}$. By Corollary \ref{cor:strong-domatic-min-deg}, $\dst(C_n)\leq 3$, and therefore we are done.
\item[(b)]
$n=3k+1$. Since $\gst(C_{n})=\gamma(C_n)=\lfloor \frac{n+2}{3} \rfloor$, then $\gst(C_{3k+1})=k+1$. So a strong dominating set of $C_{3k+1}$ has at least $k+1$ vertices, which means that we can not have a strong domatic partition of $C_{3k+1}$ of size $3$. 

\item[(c)]
$n=3k+2$. By a similar argument as part (b), we have the result.

\end{itemize}

\item[(iii)]
Suppose that $V(K_{n,m})=\{v_1,v_2,\ldots,v_n,u_1,u_2,\ldots,u_m\}$, and  for $i=1,2,\ldots,n$, $N(v_i)=\{u_1,u_2,\ldots,u_m\}$. We consider the following cases:
\begin{itemize}
\item[(a)]
$n<m$. We should have all vertices in the  strong dominating set to have a partition of $V(K_{n,m})$, because no vertex can strong dominate $v_i$ for any $1\leq i\leq n$.  So $\dst(K_{n,m})=1$.
\item[(b)]
$n=m$. Let 
$$P=\Bigl\{ \{u_1,v_1\},\{u_2,v_2\},\ldots,\{u_n,v_n\} \Bigl\}.$$
Then $P$ is a strong domatic partition of $K_{n,n}$. Since set of a single vertex is not a strong dominating set of $K_{n,n}$, so we are not able to create a strong domatic partition of a bigger size. Hence $\dst(K_{n,n})=n$, and we are done.
\end{itemize}
\item[(iv)]
It is an immediate consequence  of Theorem \ref{thm:max-degree}.
\item[(v)]
Suppose that $u$ and $v$ are the vertices with maximum degree. Let $D_1=\{u\}\cup N(v)$ and $D_2=\{v\}\cup N(u)$. Clearly, $P=\{D_1,D_2\}$ is a strong domatic partition of $B_n$, and by Theorem \ref{thm:max-degree}, we have the result.
\qed
\end{itemize}
	\end{proof}

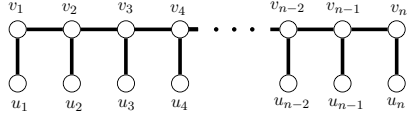
\begin{figure}
\begin{center}
\psscalebox{0.6 0.6}
{
\begin{pspicture}(0,-3.385)(8.861389,-0.955)
\psline[linecolor=black, linewidth=0.08](0.20138885,-1.585)(4.201389,-1.585)(3.8013887,-1.585)
\psline[linecolor=black, linewidth=0.08](5.8013887,-1.585)(7.4013886,-1.585)(8.601389,-1.585)(8.601389,-1.585)
\psline[linecolor=black, linewidth=0.08](0.20138885,-1.585)(0.20138885,-2.785)(0.20138885,-2.785)
\psline[linecolor=black, linewidth=0.08](1.4013889,-1.585)(1.4013889,-2.785)(1.4013889,-2.785)
\psline[linecolor=black, linewidth=0.08](2.601389,-1.585)(2.601389,-2.785)(2.601389,-2.785)
\psline[linecolor=black, linewidth=0.08](3.8013887,-1.585)(3.8013887,-2.785)(3.8013887,-2.785)
\psline[linecolor=black, linewidth=0.08](6.201389,-1.585)(6.201389,-2.785)(6.201389,-2.785)
\psline[linecolor=black, linewidth=0.08](7.4013886,-1.585)(7.4013886,-2.785)(7.4013886,-2.785)
\psline[linecolor=black, linewidth=0.08](8.601389,-1.585)(8.601389,-2.785)(8.601389,-2.785)
\psdots[linecolor=black, fillstyle=solid, dotstyle=o, dotsize=0.4, fillcolor=white](0.20138885,-1.585)
\psdots[linecolor=black, fillstyle=solid, dotstyle=o, dotsize=0.4, fillcolor=white](1.4013889,-1.585)
\psdots[linecolor=black, fillstyle=solid, dotstyle=o, dotsize=0.4, fillcolor=white](2.601389,-1.585)
\psdots[linecolor=black, fillstyle=solid, dotstyle=o, dotsize=0.4, fillcolor=white](3.8013887,-1.585)
\psdots[linecolor=black, fillstyle=solid, dotstyle=o, dotsize=0.4, fillcolor=white](6.201389,-1.585)
\psdots[linecolor=black, fillstyle=solid, dotstyle=o, dotsize=0.4, fillcolor=white](7.4013886,-1.585)
\psdots[linecolor=black, fillstyle=solid, dotstyle=o, dotsize=0.4, fillcolor=white](8.601389,-1.585)
\psdots[linecolor=black, fillstyle=solid, dotstyle=o, dotsize=0.4, fillcolor=white](0.20138885,-2.785)
\psdots[linecolor=black, fillstyle=solid, dotstyle=o, dotsize=0.4, fillcolor=white](1.4013889,-2.785)
\psdots[linecolor=black, fillstyle=solid, dotstyle=o, dotsize=0.4, fillcolor=white](2.601389,-2.785)
\psdots[linecolor=black, fillstyle=solid, dotstyle=o, dotsize=0.4, fillcolor=white](3.8013887,-2.785)
\psdots[linecolor=black, fillstyle=solid, dotstyle=o, dotsize=0.4, fillcolor=white](6.201389,-2.785)
\psdots[linecolor=black, fillstyle=solid, dotstyle=o, dotsize=0.4, fillcolor=white](7.4013886,-2.785)
\psdots[linecolor=black, fillstyle=solid, dotstyle=o, dotsize=0.4, fillcolor=white](8.601389,-2.785)
\psdots[linecolor=black, dotsize=0.1](4.601389,-1.585)
\psdots[linecolor=black, dotsize=0.1](5.001389,-1.585)
\psdots[linecolor=black, dotsize=0.1](5.4013886,-1.585)
\rput[bl](0.0013888549,-1.225){$v_1$}
\rput[bl](1.1813889,-1.245){$v_2$}
\rput[bl](2.4213889,-1.225){$v_3$}
\rput[bl](3.581389,-1.265){$v_4$}
\rput[bl](5.8013887,-1.225){$v_{n-2}$}
\rput[bl](7.041389,-1.245){$v_{n-1}$}
\rput[bl](8.461389,-1.265){$v_n$}
\rput[bl](0.061388854,-3.385){$u_1$}
\rput[bl](1.2613889,-3.385){$u_2$}
\rput[bl](2.4213889,-3.365){$u_3$}
\rput[bl](3.601389,-3.365){$u_4$}
\rput[bl](8.381389,-3.325){$u_n$}
\rput[bl](5.8813887,-3.345){$u_{n-2}$}
\rput[bl](7.081389,-3.365){$u_{n-1}$}
\end{pspicture}
}
\end{center}
\caption{$P_n \circ K_1$.} \label{fig:PnoK1}
\end{figure}

	The corona product of two graphs $F$ and $H$,  denoted by $F\circ H$,  is defined as the graph obtained by taking one copy of $F$ and $|V(F)|$ copies of $H$ and joining the $i$-th vertex of $F$ to every vertex in the $i$-th copy of $H$. 
	The following theorem gives the strong domatic number of corona of path and cycle graph with $K_1$. 
	
	\begin{theorem}
	The following holds:
\begin{itemize}
\item[(i)] For any $n\geq 2$, 
$\dst(P_n \circ K_1)=2$.

\item[(ii)] For any $n\geq 3$, 
$\dst(C_n \circ K_1)=2$.

\end{itemize}	
	\end{theorem}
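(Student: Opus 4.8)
The plan is to separate the two inequalities. Since both $P_n\circ K_1$ and $C_n\circ K_1$ contain pendant vertices (each leaf $u_i$), Theorem~\ref{thm:pendant} immediately yields $\dst\le 2$, so the whole content of the statement is the lower bound: I must exhibit a partition of the vertex set into \emph{two} strong dominating sets. Before constructing one, I would record the structural constraint that drives everything. If $\{D_1,D_2\}$ is such a partition, then for every $i$ the leaf $u_i$ and its support $v_i$ must lie in \emph{different} classes: if $u_i\in D_1$ then $D_2$ has to strong dominate $u_i$, and since $N(u_i)=\{v_i\}$ this forces $v_i\in D_2$, and symmetrically. Hence a size-two strong domatic partition is exactly a $2$-colouring $c$ of the spine $v_1,\dots,v_n$ (place $v_i$ in $D_{c(i)}$ and $u_i$ in the opposite class) subject to the single requirement that every spine vertex of colour $a$ has a spine-neighbour of colour $3-a$ whose degree is at least its own; the leaves are then automatically strong dominated by their supports, as $\deg u_i=1$.

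For part (ii) this requirement is cheap, because \emph{every} spine vertex of $C_n\circ K_1$ has degree $3$, so the degree condition is vacuous and I only need each $v_i$ to have a cycle-neighbour of the opposite colour. I would use the alternating colouring $c(i)=1$ for $i$ odd and $c(i)=2$ for $i$ even for all $n\ge 3$ at once. When $n$ is even this is a proper $2$-colouring of the cycle; when $n$ is odd it creates exactly one same-coloured adjacent pair $v_nv_1$, but each of $v_1$ and $v_n$ still has its \emph{other} cycle-neighbour ($v_2$, resp.\ $v_{n-1}$) of the opposite colour, so the condition survives. Writing $D_1,D_2$ explicitly and checking domination is then routine, and combined with $\dst\le 2$ this gives $\dst(C_n\circ K_1)=2$ for every $n\ge 3$.

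For part (i) the spine has degree profile $2,3,\dots,3,2$, so the degree condition is now genuine: a degree-$3$ internal vertex coloured oppositely to both its neighbours must possess at least one neighbour that is itself internal (degree $3$) and of the opposite colour. I would again try the alternating colouring. The two endpoints (degree $2$) are safe, since each has $v_2$ (resp.\ $v_{n-1}$) as an opposite-coloured neighbour of degree $\ge 2$. An internal vertex $v_i$ ($2\le i\le n-1$) has both $v_{i-1}$ and $v_{i+1}$ oppositely coloured, and at least one of them is internal unless both are endpoints, which happens only when $i=2$ and $i+1=n$. Thus for every $n\ge 4$ the alternating partition is a strong domatic partition, and the base case $n=2$ is checked by hand via $D_1=\{v_1,u_2\}$, $D_2=\{v_2,u_1\}$.

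The main obstacle is exactly this endpoint/degree mismatch, and the genuinely delicate value is $n=3$: there $v_2$ is the \emph{unique} vertex of maximum degree $3$, so by the argument behind Theorem~\ref{thm:max-degree} it must belong to every strong dominating set and cannot be split across two classes, forcing $\dst(P_3\circ K_1)=1$. I would therefore isolate $n=3$ and treat it on its own, since it is precisely the case where the alternating construction fails and the statement needs separate handling; all remaining values $n=2$ and $n\ge 4$ are settled by the colouring above.
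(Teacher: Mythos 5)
Your construction is the same one the paper uses --- the alternating partition of the spine with each leaf placed opposite its support, together with Theorem~\ref{thm:pendant} for the upper bound --- but your analysis is more careful, and it uncovers a genuine error in the statement itself. Your observation about $n=3$ is correct: in $P_3\circ K_1$ the centre $v_2$ has degree $3$ while all of its neighbours $v_1$, $v_3$, $u_2$ have degree at most $2$, so $v_2$ must belong to every strong dominating set, two strong dominating sets can never be disjoint, and $\dst(P_3\circ K_1)=1$. In particular the paper's displayed partition specialises at $n=3$ to $\bigl\{\{v_1,u_2,v_3\},\{u_1,v_2,u_3\}\bigr\}$, and the class $\{v_1,u_2,v_3\}$ does not strong dominate $v_2$; part (i) as stated is therefore false at $n=3$, and the paper's proof silently passes over this. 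Your case split --- $n=2$ and $n\ge 4$ by the alternating colouring, $n=3$ excluded --- is the correct version of the result. The reduction you record at the outset (that a two-class strong domatic partition is exactly a $2$-colouring of the spine in which every spine vertex has an opposite-coloured spine neighbour of degree at least its own, the leaves being forced opposite their supports and then handled automatically) is also the cleanest way to see why part (ii) is uniform in $n\ge 3$ while part (i) is not: on the cycle every spine vertex has degree $3$, so the degree condition is vacuous, whereas on the path the two endpoints of the spine create the degree mismatch that kills exactly the case $n=3$.
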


	\begin{proof}
\begin{itemize}
\item[(i)]
Consider graph $P_n \circ K_1$, as we see in Figure \ref{fig:PnoK1}. Let 
$$P=\Bigl\{ \{v_1,u_2,v_3,u_4,\ldots,v_{2t+1},u_{2t+2},\ldots\},\{u_1,v_2,u_3,v_4,\ldots,u_{2t+1},v_{2t+2},\ldots\} \Bigl\}.$$
It is easy that   $P$ is a strong domatic partition of $P_n \circ K_1$. Therefore by Theorem \ref{thm:pendant}, we have the result.
\item[(ii)]
By a similar argument as Part (i), we have the result.
\qed
\end{itemize}
	\end{proof}

The following theorem gives bounds for the strong domatic number of corona of two graphs. 

\begin{theorem}\label{thm:corona}
	Let $G$ and $H$ be two graphs. We have 
	$$1 \leq \dst(G\circ H)\leq \dst(G).$$	
\end{theorem}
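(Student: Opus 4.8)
The plan is to settle the lower bound in a line and then obtain the upper bound by projecting an arbitrary strong domatic partition of $G\circ H$ onto the copy of $G$. Throughout write $V(G)=\{w_1,\dots,w_n\}$, let $H_i$ denote the $i$-th copy of $H$, and recall that the apex $w_i$ is joined to every vertex of $H_i$. For the lower bound, observe that $V(G\circ H)$ is vacuously a strong dominating set, since its complement is empty; hence $\{V(G\circ H)\}$ is already a strong domatic partition and $\dst(G\circ H)\ge 1$.

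For the upper bound I first record the only structural facts I will use, namely the two degree identities
$$\deg_{G\circ H}(w_i)=\deg_G(w_i)+|V(H)|\qquad\text{and}\qquad \deg_{G\circ H}(h)=\deg_H(h)+1\le |V(H)|\ \ (h\in V(H_i)).$$
In particular every apex satisfies $\deg_{G\circ H}(w_i)\ge |V(H)|\ge \deg_{G\circ H}(h)$ for each gadget vertex $h$, and for two apices $\deg_{G\circ H}(w_{i'})\ge\deg_{G\circ H}(w_i)$ holds if and only if $\deg_G(w_{i'})\ge\deg_G(w_i)$. This last equivalence is what lets me pass degree comparisons between $G\circ H$ and $G$.

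Now take a strong domatic partition $P=\{D_1,\dots,D_k\}$ of $G\circ H$ with $k=\dst(G\circ H)$ and set $\pi_j=D_j\cap V(G)$. These sets partition $V(G)$, so it suffices to prove that each $\pi_j$ is a \emph{nonempty} strong dominating set of $G$: then $\{\pi_1,\dots,\pi_k\}$ is a strong domatic partition of $G$ into $k$ classes, whence $k\le\dst(G)$. The engine is a degree-translation step. If $w_i\notin D_j$, then $D_j$ strong-dominates $w_i$ in $G\circ H$, so some $y\in D_j$ adjacent to $w_i$ has $\deg_{G\circ H}(y)\ge\deg_{G\circ H}(w_i)$. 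The neighbours of $w_i$ are the apices $w_{i'}$ with $w_{i'}w_i\in E(G)$ together with the vertices of $H_i$; by the degree identities a gadget vertex is too small to serve as $y$, so $y=w_{i'}$ with $\deg_G(w_{i'})\ge\deg_G(w_i)$ and $w_{i'}\in\pi_j$, which is exactly a strong domination of $w_i$ by $\pi_j$ inside $G$. The same comparison forces $\pi_j\neq\emptyset$, since an apex left outside $D_j$ must have one of its $G$-neighbours inside $D_j$; the lone borderline case is deferred to the next paragraph.

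The step I expect to be the main obstacle is exactly the borderline hidden in the phrase ``a gadget vertex is too small to serve as $y$.'' This can fail only for an apex $w_i$ with $\deg_G(w_i)=0$: there a vertex of $H_i$ may have degree precisely $|V(H)|=\deg_{G\circ H}(w_i)$ (this occurs when $H$ has a universal vertex), and can therefore strong-dominate $w_i$ from within its own copy, so $w_i$ need not be drawn into $\pi_j$ and the clean translation breaks down for that vertex. Handling these degree-$0$ apices is the heart of the proof, and the plan is to argue about them directly rather than through the projection: one inspects how the copies $H_i$ sitting on isolated apices are distributed among $D_1,\dots,D_k$, with the aim of showing that they cannot spawn classes beyond those already accounted for by $\dst(G)$. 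Once this case is reconciled with the positive-degree analysis above, $\{\pi_1,\dots,\pi_k\}$ is a strong domatic partition of $G$ and the bound $\dst(G\circ H)\le\dst(G)$ follows.
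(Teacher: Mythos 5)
Your projection argument is sound for every apex with $\deg_G(w_i)\ge 1$, and up to that point it coincides with the paper's own proof: the paper asserts the strict inequality $\deg_{G\circ H}(u)<\deg_{G\circ H}(v_i)$ for every vertex $u$ of the copy $H_{v_i}$, concludes that no gadget vertex can strong-dominate its apex, and then reads off the bound by (implicitly) projecting each class onto $V(G)$ exactly as you do with your $\pi_j=D_j\cap V(G)$. The difference is that you noticed the strict inequality can fail, namely when $\deg_G(w_i)=0$ and $H$ has a universal vertex, and you left that case open: your last paragraph only announces a ``plan'' to inspect how the copies $H_i$ sitting on isolated apices are distributed among $D_1,\dots,D_k$, without carrying it out. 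As written this is a genuine gap, not a routine verification to be filled in later.

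Moreover, the gap cannot be closed, because the statement is false precisely in the case you isolated. Take $G=K_1$ and $H=K_1$: then $G\circ H=K_2$ and $\dst(K_2)=2>1=\dst(K_1)$. More generally, for $G=\overline{K_n}$ and $H=K_m$ the corona $G\circ H$ is a disjoint union of $n$ copies of $K_{m+1}$, which is $m$-regular, and taking as classes the sets consisting of one vertex from each component gives $\dst(G\circ H)=m+1$, while $\dst(\overline{K_n})=1$ because the only strong dominating set of $\overline{K_n}$ is $V(G)$ itself. The paper's proof overlooks this because its claimed inequality $\deg(u_{i_j})<\deg(v_i)$ is exactly what fails when $\deg_G(v_i)=0$ and $u_{i_j}$ is universal in $H$. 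So your instinct that the degree-zero apices are ``the heart of the proof'' is right in the strongest possible sense: they are a counterexample, not an obstacle. The theorem, your argument, and the paper's argument all become correct once one adds a hypothesis restoring strict inequality, e.g. $\delta(G)\ge 1$ or that $H$ has no universal vertex; under either assumption your proof is complete as it stands.
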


\begin{proof}
	Note that the set of a set including all vertices is a strong domatic partition of $G\circ H$, and we have nothing to prove for the lower bound. Now, we consider the upper bound and prove it. Suppose that $V(G)=\{v_1,v_2,\ldots,v_n\}$, and for the copy of $H$ related to vertex $v_i$, for $i=1,2,\ldots,n$, $V(H_{v_i})=\{u_{i_1},u_{i_2},\ldots,u_{i_m}\}$. By the definition of $G\circ H$ it is clear that $\deg (u_{i_j})< \deg (v_i)$, for all $j=1,2,\ldots,m$. So, there is no vertex in $V(H_{v_i})$ such that strong dominate $v_i$, for $i=1,2,\ldots,n$. Therefore, in the best case, we can find $\dst(G)$ sets to have a strong domatic partition of $G\circ H$, and we are done. 
	\qed
\end{proof}

\begin{remark}
	Bounds in Theorem \ref{thm:corona} are tight. For the lower bound, it suffices to consider $G=\overline{K_n}$ and $H=\overline{K_m}$. Then $G\circ H$ is the union of $n$ star graphs $K_{1,m}$. As shown in Remark \ref{rem:star-complete}, we have $\dst(G\circ H)=1$. For the upper bound let $G=H=K_n$. As shown in Remark \ref{rem:star-complete}, $\dst(G)=n$. Now, we present a strong domatic partition of $G\circ H$ of size $n$. Suppose that $V(G)=\{v_1,v_2,\ldots,v_n\}$, and for the copy of $H=K_n$ related to vertex $v_i$, for $i=1,2,\ldots,n$, $V(H_{v_i})=\{u_{i_1},u_{i_2},\ldots,u_{i_n}\}$. Let 
	$$A_i=\{v_i, u_{1_i}, u_{2_i}, u_{3_i}, \ldots,u_{n_i} \},$$
	for $i=1,2,\ldots,n$. Then,  
	$$P=\{A_1,A_2,A_3,\ldots,A_n\}$$
	is a strong domatic partition of $G\circ H=K_n\circ K_n$, and we have the result.  
\end{remark}

	\section{Computing  $d_{st}(G)$ for cubic graphs of order at most $10$}
	The class of cubic graphs
	is especially interesting for mathematical applications, because for various important open problems in
	graph theory, cubic graphs are the smallest or simplest possible potential counterexamples, and so this creates motivation to study  strong domatic number  for the cubic graphs of order at most $10$.
	
	Alikhani and Peng have studied the domination polynomials (which is the generating function for the number of dominating sets of a graph) of cubic graphs of order $10$ in \cite{1}. As a 	consequence, they have shown that the Petersen graph is determined uniquely by its domination polynomial. Ghanbari has studied  the Sombor characteristic polynomial and Sombor energy of these graphs in \cite{Energy}, and has shown that the  Petersen graph is not determined uniquely by its Sombor energy, but it has the maximum Sombor energy among others.

	First, we  determine the strong domatic number of the cubic graphs of order $6$. There are exactly two  cubic graphs of order $6$ which are denoted by $G_{1}$ and $G_{2}$ in Figure \ref{fig:Cubic6}.
	
\begin{figure}
\begin{center}
\psscalebox{0.7 0.7}
{
\begin{pspicture}(0,-4.575)(8.82,-0.085)
\psline[linecolor=black, linewidth=0.04](0.4,-1.355)(1.6,-0.555)(2.8,-1.355)(2.8,-2.555)(1.6,-3.355)(0.4,-2.555)(0.4,-1.355)(0.4,-1.355)
\psline[linecolor=black, linewidth=0.04](1.6,-0.555)(1.6,-3.355)
\psline[linecolor=black, linewidth=0.04](0.4,-1.355)(2.8,-1.355)
\psline[linecolor=black, linewidth=0.04](0.4,-2.555)(2.8,-2.555)
\rput[bl](1.52,-0.355){1}
\rput[bl](3.0,-1.475){2}
\rput[bl](2.98,-2.695){3}
\rput[bl](0.04,-2.735){5}
\rput[bl](0.0,-1.495){6}
\psdots[linecolor=black, dotsize=0.2](1.6,-0.555)
\psdots[linecolor=black, dotsize=0.2](0.4,-1.355)
\psdots[linecolor=black, dotsize=0.2](0.4,-2.555)
\psdots[linecolor=black, dotsize=0.2](2.8,-1.355)
\psdots[linecolor=black, dotsize=0.2](2.8,-2.555)
\psdots[linecolor=black, dotsize=0.2](1.6,-3.355)
\rput[bl](1.48,-3.855){4}
\psline[linecolor=black, linewidth=0.04](6.0,-1.355)(7.2,-0.555)(8.4,-1.355)(8.4,-2.555)(7.2,-3.355)(6.0,-2.555)(6.0,-1.355)(6.0,-1.355)
\psline[linecolor=black, linewidth=0.04](7.2,-0.555)(7.2,-3.355)
\rput[bl](7.12,-0.355){1}
\rput[bl](8.6,-1.475){2}
\rput[bl](8.58,-2.695){3}
\rput[bl](5.64,-2.735){5}
\rput[bl](5.6,-1.495){6}
\psdots[linecolor=black, dotsize=0.2](7.2,-0.555)
\psdots[linecolor=black, dotsize=0.2](6.0,-1.355)
\psdots[linecolor=black, dotsize=0.2](6.0,-2.555)
\psdots[linecolor=black, dotsize=0.2](8.4,-1.355)
\psdots[linecolor=black, dotsize=0.2](8.4,-2.555)
\psdots[linecolor=black, dotsize=0.2](7.2,-3.355)
\rput[bl](7.08,-3.855){4}
\psline[linecolor=black, linewidth=0.04](6.0,-1.355)(8.4,-2.555)(8.4,-2.555)
\psline[linecolor=black, linewidth=0.04](8.4,-1.355)(6.0,-2.555)(6.0,-2.555)
\rput[bl](1.38,-4.575){\large{$G_1$}}
\rput[bl](7.02,-4.555){\large{$G_2$}}
\end{pspicture}
}
\end{center}
\caption{\label{fig:Cubic6} Cubic graphs of order $6$.}
	\end{figure}
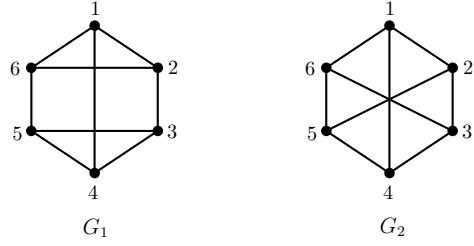

	\begin{theorem}\label{thm:cubic6}
		The strong domatic number of the cubic graphs $G_{1}$ and $G_{2}$  (Figure \ref{fig:Cubic6}) of order $6$ is $3$.  
	\end{theorem}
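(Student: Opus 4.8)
The plan is to treat the upper and lower bounds separately, exploiting the fact that both $G_1$ and $G_2$ are $3$-regular. By Corollary \ref{cor:strong-domatic-min-deg}, in a regular graph every dominating set is a strong dominating set and $\dst(G_i)=d(G_i)\le 4$ for $i=1,2$; hence the whole task reduces to (a) ruling out the value $4$ and (b) producing an explicit partition of $V(G_i)$ into three strong dominating sets.

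For the upper bound I would first observe that in a cubic graph on $6$ vertices no single vertex can strong dominate the graph, since a vertex $v$ covers only $|N[v]|=4<6$ vertices. Consequently every class of a strong domatic partition contains at least two vertices, so $\gst(G_i)\ge 2$. Because $|V(G_i)|=6$, the classes of any strong domatic partition sum to $6$ while each has size at least $2$, which forces at most $\lfloor 6/2\rfloor=3$ classes; equivalently this is the elementary counting bound $\dst\le n/\gst$ applied with $\gst(G_i)\ge 2$. This yields $\dst(G_i)\le 3$ for $i=1,2$.

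For the lower bound I would exhibit, for each graph, a partition of the six vertices into three pairs, each of which is a dominating set; by regularity each such pair is automatically a strong dominating set. For $G_1$ (the triangular prism, with triangles $\{1,2,6\}$ and $\{3,4,5\}$) the pairs $\{1,3\},\{2,5\},\{4,6\}$ work, and one checks directly that in each case the union of the two closed neighborhoods is all of $V(G_1)$. For $G_2$, which is precisely $K_{3,3}$ with parts $\{1,3,5\}$ and $\{2,4,6\}$, the pairs $\{1,2\},\{3,4\},\{5,6\}$ each contain one vertex from each side and therefore dominate the whole graph; alternatively this case follows at once from part (iii) of the preceding proposition, since $\dst(K_{n,n})=n$ gives $\dst(G_2)=3$. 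Combining the two bounds gives $\dst(G_1)=\dst(G_2)=3$.

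The only genuinely delicate point is the upper bound: one must be certain that four pairwise disjoint strong dominating sets cannot coexist, and the cleanest route is the cardinality argument above rather than any enumeration of partitions. Everything else is a short finite verification, made routine by the reduction from strong domination to ordinary domination in regular graphs supplied by Corollary \ref{cor:strong-domatic-min-deg}.
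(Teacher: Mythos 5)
Your proposal is correct and follows essentially the same route as the paper: the upper bound comes from observing that a single vertex covers only $|N[v]|=4<6$ vertices, so every class has size at least $2$ and a partition of the six vertices has at most three classes, and the lower bound comes from exhibiting an explicit partition into three dominating (hence, by regularity, strong dominating) pairs. The paper simply uses the single partition $\bigl\{\{1,4\},\{2,3\},\{5,6\}\bigr\}$, which happens to work for both graphs, whereas you give separate (equally valid) partitions and additionally identify $G_1$ as the prism and $G_2$ as $K_{3,3}$.
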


	\begin{proof} 
It is clear that a single vertex cannot strong dominate all other vertices. So, we need at least two vertices in any strong dominating sets of $G_1$ and $G_2$. We see that   	
$$P = \Bigl\{ \{1,4 \},\{2,3 \},\{5,6 \} \Bigl\}$$
is a strong domatic partition of $G_1$ and also $G_2$. Therefore we have the result.
		\qed
	\end{proof}

	\medskip
	
	Now, we compute the  strong domatic number of cubic graphs of order $8$. There are exactly $6$ cubic graphs of order $8$ which is denoted by  $G_{1},G_{2},...,G_{6}$ in Figure \ref{fig:Cubic8}.  	The following theorem gives the strong domatic numbers of cubic graphs of order $8$: 
	
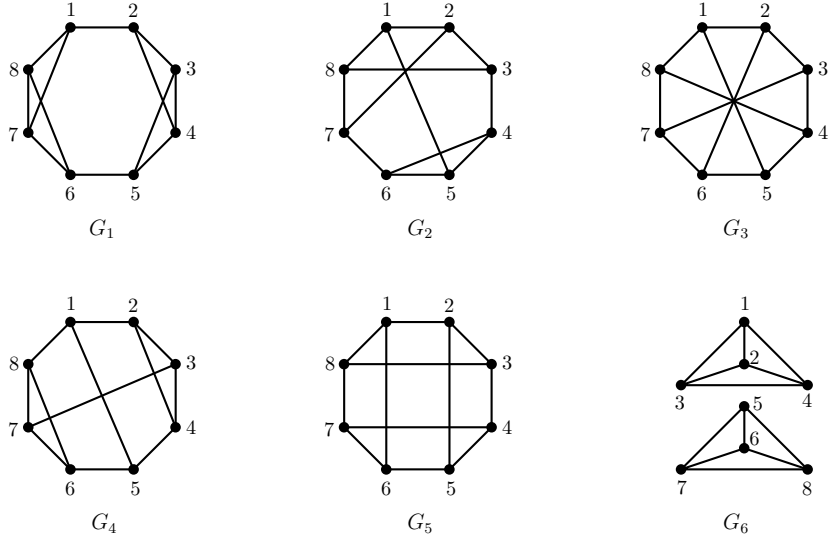
\begin{figure}
\begin{center}
\psscalebox{0.7 0.7}
{
\begin{pspicture}(0,-7.7901173)(15.509375,2.3501172)
\rput[bl](1.1,2.0901172){1}
\rput[bl](2.28,2.0701172){2}
\rput[bl](3.38,0.9701172){3}
\rput[bl](2.32,-1.4098828){5}
\rput[bl](1.08,-1.4098828){6}
\rput[bl](3.38,-0.24988282){4}
\rput[bl](7.58,-2.1298828){\large{$G_2$}}
\psdots[linecolor=black, dotsize=0.2](1.18,1.8701172)
\psdots[linecolor=black, dotsize=0.2](2.38,1.8701172)
\psdots[linecolor=black, dotsize=0.2](2.38,-0.9298828)
\psdots[linecolor=black, dotsize=0.2](1.18,-0.9298828)
\rput[bl](0.0,-0.2898828){7}
\rput[bl](0.02,0.9501172){8}
\rput[bl](7.1,2.0901172){1}
\rput[bl](8.28,2.0701172){2}
\rput[bl](9.38,0.9701172){3}
\rput[bl](8.32,-1.4098828){5}
\rput[bl](7.08,-1.4098828){6}
\rput[bl](9.38,-0.24988282){4}
\psdots[linecolor=black, dotsize=0.2](7.18,1.8701172)
\psdots[linecolor=black, dotsize=0.2](8.38,1.8701172)
\psdots[linecolor=black, dotsize=0.2](8.38,-0.9298828)
\psdots[linecolor=black, dotsize=0.2](7.18,-0.9298828)
\rput[bl](6.0,-0.2898828){7}
\rput[bl](6.02,0.9501172){8}
\rput[bl](13.9,-3.509883){1}
\rput[bl](14.08,-4.529883){2}
\rput[bl](12.66,-5.389883){3}
\rput[bl](14.14,-5.409883){5}
\rput[bl](14.08,-6.069883){6}
\rput[bl](15.08,-5.3498826){4}
\rput[bl](12.7,-6.989883){7}
\rput[bl](15.08,-7.009883){8}
\rput[bl](1.54,-2.1298828){\large{$G_1$}}
\rput[bl](13.58,-2.1298828){\large{$G_3$}}
\rput[bl](1.58,-7.7298827){\large{$G_4$}}
\rput[bl](7.58,-7.7298827){\large{$G_5$}}
\rput[bl](13.58,-7.7298827){\large{$G_6$}}
\psline[linecolor=black, linewidth=0.04](1.18,-3.7298827)(1.18,-3.7298827)
\psdots[linecolor=black, dotsize=0.2](12.78,-6.529883)
\psdots[linecolor=black, dotsize=0.2](15.18,-6.529883)
\psdots[linecolor=black, dotsize=0.2](13.98,-6.129883)
\psdots[linecolor=black, dotsize=0.2](13.98,-5.3298826)
\psline[linecolor=black, linewidth=0.04](13.98,-5.3298826)(12.78,-6.529883)(15.18,-6.529883)(13.98,-5.3298826)(13.98,-6.129883)(12.78,-6.529883)(12.78,-6.529883)
\psline[linecolor=black, linewidth=0.04](13.98,-6.129883)(15.18,-6.529883)(15.18,-6.529883)
\psdots[linecolor=black, dotsize=0.2](12.78,-4.929883)
\psdots[linecolor=black, dotsize=0.2](15.18,-4.929883)
\psdots[linecolor=black, dotsize=0.2](13.98,-4.529883)
\psdots[linecolor=black, dotsize=0.2](13.98,-3.7298827)
\psline[linecolor=black, linewidth=0.04](13.98,-3.7298827)(12.78,-4.929883)(15.18,-4.929883)(13.98,-3.7298827)(13.98,-4.529883)(12.78,-4.929883)(12.78,-4.929883)
\psline[linecolor=black, linewidth=0.04](13.98,-4.529883)(15.18,-4.929883)(15.18,-4.929883)
\psdots[linecolor=black, dotsize=0.2](3.18,1.0701172)
\psdots[linecolor=black, dotsize=0.2](3.18,-0.12988281)
\psdots[linecolor=black, dotsize=0.2](0.38,1.0701172)
\psdots[linecolor=black, dotsize=0.2](0.38,-0.12988281)
\psline[linecolor=black, linewidth=0.04](2.38,1.8701172)(3.18,1.0701172)(3.18,-0.12988281)(2.38,-0.9298828)(2.38,-0.9298828)
\psline[linecolor=black, linewidth=0.04](1.18,1.8701172)(0.38,1.0701172)(0.38,-0.12988281)(1.18,-0.9298828)(1.18,-0.9298828)
\psline[linecolor=black, linewidth=0.04](1.18,1.8701172)(0.38,-0.12988281)(0.38,-0.12988281)
\psline[linecolor=black, linewidth=0.04](0.38,1.0701172)(1.18,-0.9298828)(1.18,-0.9298828)
\psline[linecolor=black, linewidth=0.04](2.38,1.8701172)(3.18,-0.12988281)(3.18,-0.12988281)
\psline[linecolor=black, linewidth=0.04](3.18,1.0701172)(2.38,-0.9298828)(2.38,-0.9298828)
\psline[linecolor=black, linewidth=0.04](1.18,1.8701172)(2.38,1.8701172)(2.38,1.8701172)
\psline[linecolor=black, linewidth=0.04](1.18,-0.9298828)(2.38,-0.9298828)(2.38,-0.9298828)
\psdots[linecolor=black, dotsize=0.2](6.38,1.0701172)
\psdots[linecolor=black, dotsize=0.2](6.38,-0.12988281)
\psdots[linecolor=black, dotsize=0.2](9.18,1.0701172)
\psdots[linecolor=black, dotsize=0.2](9.18,-0.12988281)
\psline[linecolor=black, linewidth=0.04](7.18,1.8701172)(8.38,1.8701172)(9.18,1.0701172)(9.18,-0.12988281)(8.38,-0.9298828)(7.18,-0.9298828)(6.38,-0.12988281)(6.38,1.0701172)(7.18,1.8701172)(7.18,1.8701172)(7.18,1.8701172)
\rput[bl](7.1,-3.509883){1}
\rput[bl](8.28,-3.529883){2}
\rput[bl](9.38,-4.629883){3}
\rput[bl](8.32,-7.009883){5}
\rput[bl](7.08,-7.009883){6}
\rput[bl](9.38,-5.8498826){4}
\psdots[linecolor=black, dotsize=0.2](7.18,-3.7298827)
\psdots[linecolor=black, dotsize=0.2](8.38,-3.7298827)
\psdots[linecolor=black, dotsize=0.2](8.38,-6.529883)
\psdots[linecolor=black, dotsize=0.2](7.18,-6.529883)
\rput[bl](6.0,-5.889883){7}
\rput[bl](6.02,-4.649883){8}
\psdots[linecolor=black, dotsize=0.2](6.38,-4.529883)
\psdots[linecolor=black, dotsize=0.2](6.38,-5.7298827)
\psdots[linecolor=black, dotsize=0.2](9.18,-4.529883)
\psdots[linecolor=black, dotsize=0.2](9.18,-5.7298827)
\psline[linecolor=black, linewidth=0.04](7.18,-3.7298827)(8.38,-3.7298827)(9.18,-4.529883)(9.18,-5.7298827)(8.38,-6.529883)(7.18,-6.529883)(6.38,-5.7298827)(6.38,-4.529883)(7.18,-3.7298827)(7.18,-3.7298827)(7.18,-3.7298827)
\psline[linecolor=black, linewidth=0.04](7.18,1.8701172)(8.38,-0.9298828)(8.38,-0.9298828)
\psline[linecolor=black, linewidth=0.04](8.38,1.8701172)(6.38,-0.12988281)(6.38,-0.12988281)
\psline[linecolor=black, linewidth=0.04](9.18,1.0701172)(6.38,1.0701172)(6.38,1.0701172)
\psline[linecolor=black, linewidth=0.04](9.18,-0.12988281)(7.18,-0.9298828)(7.18,-0.9298828)
\rput[bl](13.1,2.0901172){1}
\rput[bl](14.28,2.0701172){2}
\rput[bl](15.38,0.9701172){3}
\rput[bl](14.32,-1.4098828){5}
\rput[bl](13.08,-1.4098828){6}
\rput[bl](15.38,-0.24988282){4}
\psdots[linecolor=black, dotsize=0.2](13.18,1.8701172)
\psdots[linecolor=black, dotsize=0.2](14.38,1.8701172)
\psdots[linecolor=black, dotsize=0.2](14.38,-0.9298828)
\psdots[linecolor=black, dotsize=0.2](13.18,-0.9298828)
\rput[bl](12.0,-0.2898828){7}
\rput[bl](12.02,0.9501172){8}
\psdots[linecolor=black, dotsize=0.2](12.38,1.0701172)
\psdots[linecolor=black, dotsize=0.2](12.38,-0.12988281)
\psdots[linecolor=black, dotsize=0.2](15.18,1.0701172)
\psdots[linecolor=black, dotsize=0.2](15.18,-0.12988281)
\psline[linecolor=black, linewidth=0.04](13.18,1.8701172)(14.38,1.8701172)(15.18,1.0701172)(15.18,-0.12988281)(14.38,-0.9298828)(13.18,-0.9298828)(12.38,-0.12988281)(12.38,1.0701172)(13.18,1.8701172)(13.18,1.8701172)(13.18,1.8701172)
\psline[linecolor=black, linewidth=0.04](13.18,1.8701172)(14.38,-0.9298828)(14.38,-0.9298828)
\psline[linecolor=black, linewidth=0.04](14.38,1.8701172)(13.18,-0.9298828)(13.18,-0.9298828)
\psline[linecolor=black, linewidth=0.04](15.18,1.0701172)(12.38,-0.12988281)(12.38,-0.12988281)
\psline[linecolor=black, linewidth=0.04](12.38,1.0701172)(15.18,-0.12988281)(15.18,-0.12988281)
\rput[bl](1.1,-3.509883){1}
\rput[bl](2.28,-3.529883){2}
\rput[bl](3.38,-4.629883){3}
\rput[bl](2.32,-7.009883){5}
\rput[bl](1.08,-7.009883){6}
\rput[bl](3.38,-5.8498826){4}
\psdots[linecolor=black, dotsize=0.2](1.18,-3.7298827)
\psdots[linecolor=black, dotsize=0.2](2.38,-3.7298827)
\psdots[linecolor=black, dotsize=0.2](2.38,-6.529883)
\psdots[linecolor=black, dotsize=0.2](1.18,-6.529883)
\rput[bl](0.0,-5.889883){7}
\rput[bl](0.02,-4.649883){8}
\psdots[linecolor=black, dotsize=0.2](0.38,-4.529883)
\psdots[linecolor=black, dotsize=0.2](0.38,-5.7298827)
\psdots[linecolor=black, dotsize=0.2](3.18,-4.529883)
\psdots[linecolor=black, dotsize=0.2](3.18,-5.7298827)
\psline[linecolor=black, linewidth=0.04](1.18,-3.7298827)(2.38,-3.7298827)(3.18,-4.529883)(3.18,-5.7298827)(2.38,-6.529883)(1.18,-6.529883)(0.38,-5.7298827)(0.38,-4.529883)(1.18,-3.7298827)(1.18,-3.7298827)(1.18,-3.7298827)
\psline[linecolor=black, linewidth=0.04](1.18,-3.7298827)(2.38,-6.529883)(2.38,-6.529883)
\psline[linecolor=black, linewidth=0.04](2.38,-3.7298827)(3.18,-5.7298827)(3.18,-5.7298827)
\psline[linecolor=black, linewidth=0.04](0.38,-4.529883)(1.18,-6.529883)(1.18,-6.529883)
\psline[linecolor=black, linewidth=0.04](0.38,-5.7298827)(3.18,-4.529883)
\psline[linecolor=black, linewidth=0.04](7.18,-3.7298827)(7.18,-6.529883)(7.18,-6.529883)
\psline[linecolor=black, linewidth=0.04](8.38,-3.7298827)(8.38,-6.529883)(8.38,-6.529883)
\psline[linecolor=black, linewidth=0.04](9.18,-4.529883)(6.38,-4.529883)
\psline[linecolor=black, linewidth=0.04](6.38,-5.7298827)(9.18,-5.7298827)
\end{pspicture}
}
\end{center}
\caption{\label{fig:Cubic8} Cubic graphs of order $8$.}
	\end{figure}

	\begin{theorem}\label{thm:cubic8}
		For the cubic graphs $G_{1},G_{2},...,G_{6}$ of order $8$ (Figure \ref{fig:Cubic8}) we have: 	
\begin{enumerate} 
\item[(i)] 
$\dst(G_1)=\dst(G_5)=\dst(G_6)=4.$			
\item[(ii)] 
$\dst(G_2)=\dst(G_3)=2.$
\item[(iii)] 
$\dst(G_4)=3$			
\end{enumerate} 
	\end{theorem}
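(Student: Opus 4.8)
The plan is to reduce everything to the ordinary domatic number. Since each $G_i$ is $3$-regular, Corollary~\ref{cor:strong-domatic-min-deg} tells us that $\dst(G_i)=d(G_i)$, that here a strong dominating set is the same thing as a dominating set, and that $\dst(G_i)\le 4$. Writing $N[v]=N(v)\cup\{v\}$, every closed neighbourhood in a cubic graph has exactly four vertices, so on $8$ vertices a two-element set $\{a,b\}$ is dominating if and only if $N[a]\cap N[b]=\varnothing$, i.e.\ $N[a]$ and $N[b]$ partition $V(G_i)$; I will call such a pair an \emph{efficient pair}. In particular $\gamma(G_i)\ge 2$, with equality exactly when an efficient pair exists. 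The second structural fact I will use is: if $d(G_i)=4$ then, since the four classes are dominating sets of size $\ge 2$ summing to $8$, each class has size exactly $2$ and is therefore an efficient pair; hence $d(G_i)=4$ holds if and only if $V(G_i)$ splits into four efficient pairs.

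For part (i) I would simply exhibit such a partition. Reading the labels from Figure~\ref{fig:Cubic8}, for $G_1$ the antipodal pairs $\{1,5\},\{2,6\},\{3,7\},\{4,8\}$ work; for $G_5$ the pairs $\{1,4\},\{2,7\},\{3,6\},\{5,8\}$ work; and $G_6$ is the disjoint union $K_4\cup K_4$, where pairing each vertex of one copy with a vertex of the other gives four efficient pairs. In each case one checks that the closed neighbourhoods of a pair are complementary, so each class dominates; the matching upper bound $d\le 4$ is Corollary~\ref{cor:strong-domatic-min-deg}.

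For part (ii) the key is that $\gamma(G_2)=\gamma(G_3)=3$. Here I would verify that no two closed neighbourhoods are disjoint---equivalently, that neither graph has an efficient pair---so $\gamma\ge 3$, while a single dominating triple (for instance $\{1,4,7\}$ in $G_2$ and $\{1,3,5\}$ in $G_3$) shows $\gamma=3$. Then every class of a domatic partition has at least three vertices, so three classes would already require $9>8$ vertices, whence $d\le 2$. The bipartition into odd- and even-indexed vertices $\{1,3,5,7\}$, $\{2,4,6,8\}$ is a partition into two dominating sets (each perimeter vertex is adjacent to two vertices of the opposite class along the $8$-cycle), giving $d=2$. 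For part (iii), $G_4$ does have an efficient pair, e.g.\ $\{2,6\}$, so $\gamma(G_4)=2$ and the size argument no longer forbids $d=4$. Instead I would enumerate all efficient pairs of $G_4$ directly from its eight sets $N[v]$: they are only $\{2,6\}$ and $\{4,8\}$, which between them miss $\{1,3,5,7\}$, so $V(G_4)$ cannot be split into four efficient pairs and $d(G_4)\le 3$. Finally $\{2,6\},\{4,8\},\{1,3,5,7\}$ is a partition into three dominating sets, so $d(G_4)=3$.

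The routine lower-bound partitions are easy; the real work is in the upper bounds. For $G_2$ and $G_3$ this is the claim $\gamma=3$, which amounts to checking that none of the $\binom{8}{2}$ vertex pairs has complementary closed neighbourhoods---a finite but error-prone verification, best organised by listing the eight sets $N[v]$ and testing for disjointness. For $G_4$ the analogous obstacle is the exhaustive enumeration of efficient pairs together with the observation that the two that exist cannot be extended to a full partition; again this is a short computation whose only difficulty is not miscounting. I expect the main pitfall throughout to be reading the adjacencies off Figure~\ref{fig:Cubic8} correctly, since everything above depends on the precise chord structure of each octagon (and on recognising $G_6$ as $K_4\cup K_4$).
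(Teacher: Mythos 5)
Your proof is correct and follows essentially the same route as the paper: the upper bound $\dst\le 4$ from $3$-regularity, explicit partitions for the lower bounds (indeed the same pairs for $G_1$ and $G_5$), and a counting argument based on showing that $G_2,G_3$ admit no $2$-element dominating set while in $G_4$ the only ones are $\{2,6\}$ and $\{4,8\}$, so that a vertex such as $1$ forces a class of size at least $3$. Your \emph{efficient pair} reformulation (a $2$-set dominates iff the two closed neighbourhoods are complementary) merely organises the paper's vertex-by-vertex case analysis more systematically; it is not a genuinely different argument.
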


	\begin{proof} 
\begin{enumerate} 
\item[(i)] 
By Theorem \ref{thm:max-degree}, for a cubic graph $G$ of order $8$ we have $\dst(G)\leq 4$. Now we present the strong domatic partition of size $4$ for $G_1$, $G_5$ and $G_6$. Consider the following sets:
\begin{align*}
P_{1} &= \Bigl\{ \{1,5\},\{2,6\},\{3,7\},\{4,8 \} \Bigl\}, &
P_{5} &= \Bigl\{ \{1,4\},\{2,7\},\{3,6\},\{5,8 \} \Bigl\},\\
P_{6} &= \Bigl\{ \{1,5\},\{2,6\},\{3,7\},\{4,8 \} \Bigl\}.
\end{align*}
Observe that  $P_i$ is 	a strong domatic partition of $G_i$, for $i=1,5,6$ and so we have the result.
\item[(ii)] 
Suppose that $D$ is a strong dominating set of $G_2$. We show that $|D|\geq 3$. If we have two adjacent vertices in $D$, then  at least one vertex is not strong dominate by them. So we consider other cases. If $1\in D$, then it strong dominate  $2,5,7$, and we need at least two vertices among $3,4,6,8$ to be in $D$. If $2\in D$, then it strong dominate  $1,3,8$, and we need at least two vertices among $4,5,6,7$ to be in $D$. If $3\in D$, then it strong dominate  $2,4,8$, and we need at least two vertices among $1,5,6,7$ to be in $D$. If $4\in D$, then it strong dominate  $3,5,6$, and we need at least two vertices among $1,2,7,8$ to be in $D$. If $5\in D$, then it  strong dominate  $1,4,6$, and we need at least two vertices among $2,3,7,8$ to be in $D$. If $6\in D$, then it strong dominate  $4,5,7$, and we need at least two vertices among $1,2,3,8$ to be in $D$. If $7\in D$, then it strong dominate  $2,6,8$, and we need at least two vertices among $1,3,4,5$ to be in $D$. And finally if $8\in D$, then it  strong dominate  $1,3,7$, and we need at least two vertices among $2,4,5,6$ to be in $D$. So $|D|\geq 3$. Suppose that $P$ is a strong domatic partition of $G_2$ of the biggest size. By our argument $|P|$ cannot be $3$ or $4$, because then we need a strong dominating set of size $2$. So $|P|\leq 2$. It is clear that 
$$P_2 = \Bigl\{ \{1,3,5,7\},\{2,4,6,8\} \Bigl\}$$
is a strong domatic partition of $G_2$, and we are done.  By a similar argument we have $\dst(G_3)=2$.
\item[(iii)]
For $G_3$ it is possible to have strong dominating sets of size $2$ which are 
$\{2,6\}$ and $\{4,8\}$. Now suppose that $D$ is a strong dominating set of $G_5$ and $1\in D$. By a similar argument as part (ii) we conclude that $|D|\geq 3$. Now suppose that $P$ is a strong domatic partition of $G_5$ of the biggest size. By our argument $|P|$ cannot be $4$, because then we need that all of strong dominating sets be of size $2$. So $|P|\leq 3$. It is clear that 
$$P_5 = \Bigl\{ \{2,6\},\{4,8\},\{1,3,5,7\} \Bigl\}$$
is a strong domatic partition of $G_2$, and we are done.
\qed	
\end{enumerate} 
	\end{proof}

  One of the famous cubic graphs is the Petersen
graph which is a symmetric non-planar $3$-regular graph of order $10$.  
There are exactly twenty one $3$-regular graphs of order $10$ \cite{1}. 
Now, we study the strong domatic number of cubic graphs of order $10$.

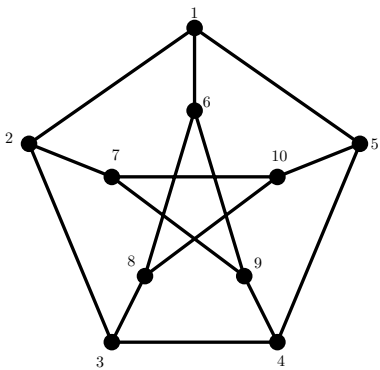
\begin{figure}
	\begin{center}
		\psscalebox{0.55 0.55}
{
\begin{pspicture}(0,-5.845)(9.06,2.865)
\psdots[linecolor=black, dotsize=0.4](4.58,2.355)
\psdots[linecolor=black, dotsize=0.4](4.58,0.355)
\psdots[linecolor=black, dotsize=0.4](2.58,-1.245)
\psdots[linecolor=black, dotsize=0.4](3.38,-3.645)
\psdots[linecolor=black, dotsize=0.4](5.78,-3.645)
\psdots[linecolor=black, dotsize=0.4](6.58,-1.245)
\psline[linecolor=black, linewidth=0.08](4.58,0.355)(3.38,-3.645)(6.58,-1.245)(2.58,-1.245)(5.78,-3.645)(4.58,0.355)(4.58,0.355)
\psdots[linecolor=black, dotsize=0.4](8.58,-0.445)
\psdots[linecolor=black, dotsize=0.4](0.58,-0.445)
\psdots[linecolor=black, dotsize=0.4](6.58,-5.245)
\psdots[linecolor=black, dotsize=0.4](2.58,-5.245)
\psline[linecolor=black, linewidth=0.08](4.58,2.355)(8.58,-0.445)(6.58,-5.245)(2.58,-5.245)(0.58,-0.445)(4.58,2.355)(4.58,0.355)(4.58,0.355)
\psline[linecolor=black, linewidth=0.08](6.58,-1.245)(8.58,-0.445)(8.58,-0.445)
\psline[linecolor=black, linewidth=0.08](5.78,-3.645)(6.58,-5.245)(6.58,-5.245)
\psline[linecolor=black, linewidth=0.08](3.38,-3.645)(2.58,-5.245)(2.58,-5.245)
\psline[linecolor=black, linewidth=0.08](2.58,-1.245)(0.58,-0.445)(0.58,-0.445)
\rput[bl](4.48,2.595){1}
\rput[bl](4.78,0.435){6}
\rput[bl](0.0,-0.425){2}
\rput[bl](2.2,-5.845){3}
\rput[bl](6.58,-5.825){4}
\rput[bl](8.84,-0.565){5}
\rput[bl](2.58,-0.845){7}
\rput[bl](2.96,-3.405){8}
\rput[bl](6.02,-3.445){9}
\rput[bl](6.44,-0.865){10}
\end{pspicture}
}
	\end{center}
	\caption{Petersen graph $P$. } \label{fig:petersen}
\end{figure}

First we state and prove the following theorem for the Petersen graph.

	\begin{theorem}\label{thm:petersen}
For the Petersen graph, $\dst(P)=2$.
	\end{theorem}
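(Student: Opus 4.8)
The plan is to prove that $\dst(P)=2$ for the Petersen graph by establishing the lower bound $\dst(P)\geq 2$ constructively and the upper bound $\dst(P)\leq 2$ by ruling out a partition into three strong dominating sets. Since $P$ is $3$-regular, by Corollary \ref{cor:strong-domatic-min-deg} we already know $\dst(P)=d(P)\leq 4$, and moreover every dominating set is automatically a strong dominating set because all degrees equal $3$; this reduces the whole problem to a question about ordinary domatic partitions of $P$.

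For the lower bound, I would simply exhibit a partition of the ten vertices into two dominating sets. Since $\gamma(P)=3$, a dominating set must have at least three vertices, so a partition into two classes of sizes $5$ and $5$ (or $4$ and $6$, etc.) is the natural target. First I would pick a dominating set $D_1$ of size $3$ (for instance a suitable independent set meeting both the outer pentagon and the inner pentagram) and verify its complement $D_2=V(P)\setminus D_1$ of size $7$ is also dominating; any dominating set together with a large enough complement works, and checking domination of two explicit vertex sets is a short finite verification using the adjacency structure in Figure \ref{fig:petersen}.

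The hard part will be the upper bound, namely showing no partition into three dominating sets exists, i.e. $d(P)<3$. This is where I expect the real obstacle. A counting argument is tempting: a domatic partition into $3$ classes of a $10$-vertex graph forces the average class size to be $10/3$, so some class has at most $3$ vertices, hence exactly $3$ since $\gamma(P)=3$; thus any size-$3$ domatic partition would have class sizes $3,3,4$. I would then use the strong structural rigidity of $P$: every dominating set of size $3=\gamma(P)$ in the Petersen graph is a \emph{perfect} or near-perfect dominating arrangement, and in fact $P$ has very few $\gamma$-sets up to symmetry. The plan is to enumerate, up to the automorphism group, the dominating sets of size $3$, show that any two disjoint such sets cannot leave a dominating remainder of size $4$, and thereby derive a contradiction. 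Because $P$ is vertex- and edge-transitive, this enumeration collapses to checking essentially one or two cases, so although the argument is a finite case analysis, symmetry keeps it short; I would lean on the fact that no three pairwise-disjoint dominating sets can simultaneously cover the closed neighborhoods of all vertices given that each vertex has closed neighborhood of size exactly $4$ and each class of size $3$ or $4$ can dominate at most its $4\cdot(\text{size})$ closed-neighborhood incidences, forcing an over-constrained covering that fails. Establishing this incidence/covering contradiction cleanly is the crux of the proof.
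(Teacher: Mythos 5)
Your overall skeleton matches the paper's: use $3$-regularity to identify strong domination with ordinary domination (Corollary \ref{cor:strong-domatic-min-deg}), exhibit a $2$-class partition for the lower bound (the paper uses the outer/inner pentagons $A=\{1,\dots,5\}$, $B=\{6,\dots,10\}$; your ``$\gamma$-set plus complement'' works equally well), and for the upper bound observe that a $3$-class partition of $10$ vertices forces a class of size exactly $3=\gamma(P)$ and then analyze the size-$3$ dominating sets. However, the step you yourself flag as the crux is not established, and the specific mechanism you offer for it fails. The incidence count gives no contradiction: three classes each dominating all $10$ vertices require at most $3\cdot 10=30$ closed-neighborhood incidences, while $\sum_v |N[v]|=10\cdot 4=40$ are available, so the covering is not over-constrained and no contradiction follows from counting alone. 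Likewise, your framing via ``two disjoint $\gamma$-sets plus a size-$4$ remainder'' presupposes more structure than the counting argument delivers (it only guarantees one class of size $3$).

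The missing idea, which is exactly what the paper supplies, is the characterization of the $\gamma$-sets of $P$: every dominating set of size $3$ is the open neighborhood $N(v)$ of some vertex $v$. The paper verifies this for sets containing vertex $1$ by direct inspection -- they are $\{1,3,7\}=N(2)$, $\{1,4,10\}=N(5)$, $\{1,8,9\}=N(6)$ -- and extends it to all vertices by vertex-transitivity. Once this is known, a single size-$3$ class $D=N(v)$ already kills any $3$-partition: $v$ lies in one of the other two classes, and the remaining class is disjoint from $N(v)$, hence contains no neighbor of $v$ and cannot dominate it. So your enumeration-up-to-symmetry plan would succeed if carried out, but you would need to actually prove the $N(v)$ characterization (or at least enumerate the $\gamma$-sets through vertex $1$); as written, the proposal identifies where the difficulty lies without resolving it.
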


	\begin{proof}
 Suppose that $S$ is a strong dominating set of $P$. Since each vertex in $S$  strong dominate  at most $3$ other vertices,  we need to have $|S|\geq 3$. Consider Figure \ref{fig:petersen}. Note that no subset of size three of $A=\{1,2,3,4,5\}$ or $B=\{6,7,8,9,10\}$ is a strong dominating set of $P$. So, we need at least one element of $A$, and at least one element of $B$. Now, we claim that if we have a strong dominating set of size $3$, then it is not possible to have a strong domatic partition of $P$ of size $3$. We consider vertex $1\in A$. One can easily check that the only possible strong dominating sets of $P$ of size three, which contain  $1$, are the following:
\begin{align*}
S_{1} &=  \{1,3,7\}, &
S_{2} &=  \{1,4,10\}, &
S_{3} &=  \{1,8,9\}.
\end{align*} 
Since all of the elements of $S_1$ strong dominate  $2$ and  $N(2)=S_1$, so  clearly it is not possible to have a strong domatic partition of $P$ of size $3$. By the same reason, since $N(5)=S_2$ and $N(6)=S_3$, so it is not possible to have a strong domatic partition of $P$ of size $3$ including $1$. So we need to have $1$ in a strong dominating set of bigger size. Since Petersen graph is a symmetric graph, this argument holds for all vertices. So, if we have a strong dominating set of size $3$, then it is not possible to have a strong domatic partition of $P$ of size $3$, as we claimed. Since we have only  $10$ vertices, it is not possible to have a strong domatic partition of $P$ of size three and it has at least four elements. So $\dst(P)\leq 2$. Clearly, 
$P=\{A,B\}$ is a strong domatic partition of $P$, and therefore we have the result. 
\qed
	\end{proof}

\begin{figure}[!h]
\begin{center}
\psscalebox{0.69 0.69}
{
\begin{pspicture}(0,-10.145)(21.18,9.525)
\rput[bl](1.48,9.255){1}
\rput[bl](2.26,9.235){2}
\rput[bl](2.96,8.135){3}
\rput[bl](2.3,6.155){5}
\rput[bl](1.46,6.155){6}
\rput[bl](2.96,7.315){4}
\rput[bl](0.66,6.175){7}
\rput[bl](0.0,7.315){8}
\rput[bl](1.4,5.455){\large{$G_1$}}
\rput[bl](0.0,8.095){9}
\rput[bl](0.52,9.255){10}
\rput[bl](5.0,5.455){\large{$G_2$}}
\rput[bl](8.6,5.455){\large{$G_3$}}
\rput[bl](12.2,5.455){\large{$G_4$}}
\rput[bl](15.8,5.455){\large{$G_5$}}
\rput[bl](19.4,5.455){\large{$G_6$}}
\psdots[linecolor=black, dotsize=0.2](0.76,9.035)
\psdots[linecolor=black, dotsize=0.2](1.56,9.035)
\psdots[linecolor=black, dotsize=0.2](2.36,9.035)
\psdots[linecolor=black, dotsize=0.2](2.76,8.235)
\psdots[linecolor=black, dotsize=0.2](2.76,7.435)
\psdots[linecolor=black, dotsize=0.2](2.36,6.635)
\psdots[linecolor=black, dotsize=0.2](1.56,6.635)
\psdots[linecolor=black, dotsize=0.2](0.76,6.635)
\psdots[linecolor=black, dotsize=0.2](0.36,7.435)
\psdots[linecolor=black, dotsize=0.2](0.36,8.235)
\psline[linecolor=black, linewidth=0.04](0.76,9.035)(0.36,8.235)(0.36,7.435)(0.76,6.635)(0.76,6.635)
\psline[linecolor=black, linewidth=0.04](2.36,9.035)(2.76,8.235)(2.76,7.435)(2.36,6.635)(2.36,6.635)
\psline[linecolor=black, linewidth=0.04](0.76,9.035)(2.36,9.035)(2.36,9.035)
\psline[linecolor=black, linewidth=0.04](0.76,6.635)(2.36,6.635)(2.36,6.635)
\rput[bl](5.08,9.255){1}
\rput[bl](5.86,9.235){2}
\rput[bl](6.56,8.135){3}
\rput[bl](5.9,6.155){5}
\rput[bl](5.06,6.155){6}
\rput[bl](6.56,7.315){4}
\rput[bl](4.26,6.175){7}
\rput[bl](3.6,7.315){8}
\rput[bl](3.6,8.095){9}
\rput[bl](4.12,9.255){10}
\psdots[linecolor=black, dotsize=0.2](4.36,9.035)
\psdots[linecolor=black, dotsize=0.2](5.16,9.035)
\psdots[linecolor=black, dotsize=0.2](5.96,9.035)
\psdots[linecolor=black, dotsize=0.2](6.36,8.235)
\psdots[linecolor=black, dotsize=0.2](6.36,7.435)
\psdots[linecolor=black, dotsize=0.2](5.96,6.635)
\psdots[linecolor=black, dotsize=0.2](5.16,6.635)
\psdots[linecolor=black, dotsize=0.2](4.36,6.635)
\psdots[linecolor=black, dotsize=0.2](3.96,7.435)
\psdots[linecolor=black, dotsize=0.2](3.96,8.235)
\psline[linecolor=black, linewidth=0.04](4.36,9.035)(3.96,8.235)(3.96,7.435)(4.36,6.635)(4.36,6.635)
\psline[linecolor=black, linewidth=0.04](5.96,9.035)(6.36,8.235)(6.36,7.435)(5.96,6.635)(5.96,6.635)
\psline[linecolor=black, linewidth=0.04](4.36,9.035)(5.96,9.035)(5.96,9.035)
\psline[linecolor=black, linewidth=0.04](4.36,6.635)(5.96,6.635)(5.96,6.635)
\rput[bl](8.68,9.255){1}
\rput[bl](9.46,9.235){2}
\rput[bl](10.16,8.135){3}
\rput[bl](9.5,6.155){5}
\rput[bl](8.66,6.155){6}
\rput[bl](10.16,7.315){4}
\rput[bl](7.86,6.175){7}
\rput[bl](7.2,7.315){8}
\rput[bl](7.2,8.095){9}
\rput[bl](7.72,9.255){10}
\psdots[linecolor=black, dotsize=0.2](7.96,9.035)
\psdots[linecolor=black, dotsize=0.2](8.76,9.035)
\psdots[linecolor=black, dotsize=0.2](9.56,9.035)
\psdots[linecolor=black, dotsize=0.2](9.96,8.235)
\psdots[linecolor=black, dotsize=0.2](9.96,7.435)
\psdots[linecolor=black, dotsize=0.2](9.56,6.635)
\psdots[linecolor=black, dotsize=0.2](8.76,6.635)
\psdots[linecolor=black, dotsize=0.2](7.96,6.635)
\psdots[linecolor=black, dotsize=0.2](7.56,7.435)
\psdots[linecolor=black, dotsize=0.2](7.56,8.235)
\psline[linecolor=black, linewidth=0.04](7.96,9.035)(7.56,8.235)(7.56,7.435)(7.96,6.635)(7.96,6.635)
\psline[linecolor=black, linewidth=0.04](9.56,9.035)(9.96,8.235)(9.96,7.435)(9.56,6.635)(9.56,6.635)
\psline[linecolor=black, linewidth=0.04](7.96,9.035)(9.56,9.035)(9.56,9.035)
\psline[linecolor=black, linewidth=0.04](7.96,6.635)(9.56,6.635)(9.56,6.635)
\rput[bl](12.28,9.255){1}
\rput[bl](13.06,9.235){2}
\rput[bl](13.76,8.135){3}
\rput[bl](13.1,6.155){5}
\rput[bl](12.26,6.155){6}
\rput[bl](13.76,7.315){4}
\rput[bl](11.46,6.175){7}
\rput[bl](10.8,7.315){8}
\rput[bl](10.8,8.095){9}
\rput[bl](11.32,9.255){10}
\psdots[linecolor=black, dotsize=0.2](11.56,9.035)
\psdots[linecolor=black, dotsize=0.2](12.36,9.035)
\psdots[linecolor=black, dotsize=0.2](13.16,9.035)
\psdots[linecolor=black, dotsize=0.2](13.56,8.235)
\psdots[linecolor=black, dotsize=0.2](13.56,7.435)
\psdots[linecolor=black, dotsize=0.2](13.16,6.635)
\psdots[linecolor=black, dotsize=0.2](12.36,6.635)
\psdots[linecolor=black, dotsize=0.2](11.56,6.635)
\psdots[linecolor=black, dotsize=0.2](11.16,7.435)
\psdots[linecolor=black, dotsize=0.2](11.16,8.235)
\psline[linecolor=black, linewidth=0.04](11.56,9.035)(11.16,8.235)(11.16,7.435)(11.56,6.635)(11.56,6.635)
\psline[linecolor=black, linewidth=0.04](13.16,9.035)(13.56,8.235)(13.56,7.435)(13.16,6.635)(13.16,6.635)
\psline[linecolor=black, linewidth=0.04](11.56,9.035)(13.16,9.035)(13.16,9.035)
\psline[linecolor=black, linewidth=0.04](11.56,6.635)(13.16,6.635)(13.16,6.635)
\rput[bl](15.88,9.255){1}
\rput[bl](16.66,9.235){2}
\rput[bl](17.36,8.135){3}
\rput[bl](16.7,6.155){5}
\rput[bl](15.86,6.155){6}
\rput[bl](17.36,7.315){4}
\rput[bl](15.06,6.175){7}
\rput[bl](14.4,7.315){8}
\rput[bl](14.4,8.095){9}
\rput[bl](14.92,9.255){10}
\psdots[linecolor=black, dotsize=0.2](15.16,9.035)
\psdots[linecolor=black, dotsize=0.2](15.96,9.035)
\psdots[linecolor=black, dotsize=0.2](16.76,9.035)
\psdots[linecolor=black, dotsize=0.2](17.16,8.235)
\psdots[linecolor=black, dotsize=0.2](17.16,7.435)
\psdots[linecolor=black, dotsize=0.2](16.76,6.635)
\psdots[linecolor=black, dotsize=0.2](15.96,6.635)
\psdots[linecolor=black, dotsize=0.2](15.16,6.635)
\psdots[linecolor=black, dotsize=0.2](14.76,7.435)
\psdots[linecolor=black, dotsize=0.2](14.76,8.235)
\psline[linecolor=black, linewidth=0.04](15.16,9.035)(14.76,8.235)(14.76,7.435)(15.16,6.635)(15.16,6.635)
\psline[linecolor=black, linewidth=0.04](16.76,9.035)(17.16,8.235)(17.16,7.435)(16.76,6.635)(16.76,6.635)
\psline[linecolor=black, linewidth=0.04](15.16,9.035)(16.76,9.035)(16.76,9.035)
\psline[linecolor=black, linewidth=0.04](15.16,6.635)(16.76,6.635)(16.76,6.635)
\rput[bl](19.48,9.255){1}
\rput[bl](20.26,9.235){2}
\rput[bl](20.96,8.135){3}
\rput[bl](20.3,6.155){5}
\rput[bl](19.46,6.155){6}
\rput[bl](20.96,7.315){4}
\rput[bl](18.66,6.175){7}
\rput[bl](18.0,7.315){8}
\rput[bl](18.0,8.095){9}
\rput[bl](18.52,9.255){10}
\psdots[linecolor=black, dotsize=0.2](18.76,9.035)
\psdots[linecolor=black, dotsize=0.2](19.56,9.035)
\psdots[linecolor=black, dotsize=0.2](20.36,9.035)
\psdots[linecolor=black, dotsize=0.2](20.76,8.235)
\psdots[linecolor=black, dotsize=0.2](20.76,7.435)
\psdots[linecolor=black, dotsize=0.2](20.36,6.635)
\psdots[linecolor=black, dotsize=0.2](19.56,6.635)
\psdots[linecolor=black, dotsize=0.2](18.76,6.635)
\psdots[linecolor=black, dotsize=0.2](18.36,7.435)
\psdots[linecolor=black, dotsize=0.2](18.36,8.235)
\psline[linecolor=black, linewidth=0.04](18.76,9.035)(18.36,8.235)(18.36,7.435)(18.76,6.635)(18.76,6.635)
\psline[linecolor=black, linewidth=0.04](20.36,9.035)(20.76,8.235)(20.76,7.435)(20.36,6.635)(20.36,6.635)
\psline[linecolor=black, linewidth=0.04](18.76,9.035)(20.36,9.035)(20.36,9.035)
\psline[linecolor=black, linewidth=0.04](18.76,6.635)(20.36,6.635)(20.36,6.635)
\rput[bl](1.48,4.055){1}
\rput[bl](2.26,4.035){2}
\rput[bl](2.96,2.935){3}
\rput[bl](2.3,0.955){5}
\rput[bl](1.46,0.955){6}
\rput[bl](2.96,2.115){4}
\rput[bl](0.66,0.975){7}
\rput[bl](0.0,2.115){8}
\rput[bl](0.0,2.895){9}
\rput[bl](0.52,4.055){10}
\psdots[linecolor=black, dotsize=0.2](0.76,3.835)
\psdots[linecolor=black, dotsize=0.2](1.56,3.835)
\psdots[linecolor=black, dotsize=0.2](2.36,3.835)
\psdots[linecolor=black, dotsize=0.2](2.76,3.035)
\psdots[linecolor=black, dotsize=0.2](2.76,2.235)
\psdots[linecolor=black, dotsize=0.2](2.36,1.435)
\psdots[linecolor=black, dotsize=0.2](1.56,1.435)
\psdots[linecolor=black, dotsize=0.2](0.76,1.435)
\psdots[linecolor=black, dotsize=0.2](0.36,2.235)
\psdots[linecolor=black, dotsize=0.2](0.36,3.035)
\psline[linecolor=black, linewidth=0.04](0.76,3.835)(0.36,3.035)(0.36,2.235)(0.76,1.435)(0.76,1.435)
\psline[linecolor=black, linewidth=0.04](2.36,3.835)(2.76,3.035)(2.76,2.235)(2.36,1.435)(2.36,1.435)
\psline[linecolor=black, linewidth=0.04](0.76,3.835)(2.36,3.835)(2.36,3.835)
\psline[linecolor=black, linewidth=0.04](0.76,1.435)(2.36,1.435)(2.36,1.435)
\rput[bl](5.08,4.055){1}
\rput[bl](5.86,4.035){2}
\rput[bl](6.56,2.935){3}
\rput[bl](5.9,0.955){5}
\rput[bl](5.06,0.955){6}
\rput[bl](6.56,2.115){4}
\rput[bl](4.26,0.975){7}
\rput[bl](3.6,2.115){8}
\rput[bl](3.6,2.895){9}
\rput[bl](4.12,4.055){10}
\psdots[linecolor=black, dotsize=0.2](4.36,3.835)
\psdots[linecolor=black, dotsize=0.2](5.16,3.835)
\psdots[linecolor=black, dotsize=0.2](5.96,3.835)
\psdots[linecolor=black, dotsize=0.2](6.36,3.035)
\psdots[linecolor=black, dotsize=0.2](6.36,2.235)
\psdots[linecolor=black, dotsize=0.2](5.96,1.435)
\psdots[linecolor=black, dotsize=0.2](5.16,1.435)
\psdots[linecolor=black, dotsize=0.2](4.36,1.435)
\psdots[linecolor=black, dotsize=0.2](3.96,2.235)
\psdots[linecolor=black, dotsize=0.2](3.96,3.035)
\psline[linecolor=black, linewidth=0.04](4.36,3.835)(3.96,3.035)(3.96,2.235)(4.36,1.435)(4.36,1.435)
\psline[linecolor=black, linewidth=0.04](5.96,3.835)(6.36,3.035)(6.36,2.235)(5.96,1.435)(5.96,1.435)
\psline[linecolor=black, linewidth=0.04](4.36,3.835)(5.96,3.835)(5.96,3.835)
\psline[linecolor=black, linewidth=0.04](4.36,1.435)(5.96,1.435)(5.96,1.435)
\rput[bl](8.68,4.055){1}
\rput[bl](9.46,4.035){2}
\rput[bl](10.16,2.935){3}
\rput[bl](9.5,0.955){5}
\rput[bl](8.66,0.955){6}
\rput[bl](10.16,2.115){4}
\rput[bl](7.86,0.975){7}
\rput[bl](7.2,2.115){8}
\rput[bl](7.2,2.895){9}
\rput[bl](7.72,4.055){10}
\psdots[linecolor=black, dotsize=0.2](7.96,3.835)
\psdots[linecolor=black, dotsize=0.2](8.76,3.835)
\psdots[linecolor=black, dotsize=0.2](9.56,3.835)
\psdots[linecolor=black, dotsize=0.2](9.96,3.035)
\psdots[linecolor=black, dotsize=0.2](9.96,2.235)
\psdots[linecolor=black, dotsize=0.2](9.56,1.435)
\psdots[linecolor=black, dotsize=0.2](8.76,1.435)
\psdots[linecolor=black, dotsize=0.2](7.96,1.435)
\psdots[linecolor=black, dotsize=0.2](7.56,2.235)
\psdots[linecolor=black, dotsize=0.2](7.56,3.035)
\psline[linecolor=black, linewidth=0.04](7.96,3.835)(7.56,3.035)(7.56,2.235)(7.96,1.435)(7.96,1.435)
\psline[linecolor=black, linewidth=0.04](9.56,3.835)(9.96,3.035)(9.96,2.235)(9.56,1.435)(9.56,1.435)
\psline[linecolor=black, linewidth=0.04](7.96,3.835)(9.56,3.835)(9.56,3.835)
\psline[linecolor=black, linewidth=0.04](7.96,1.435)(9.56,1.435)(9.56,1.435)
\rput[bl](12.28,4.055){1}
\rput[bl](13.06,4.035){2}
\rput[bl](13.76,2.935){3}
\rput[bl](13.1,0.955){5}
\rput[bl](12.26,0.955){6}
\rput[bl](13.76,2.115){4}
\rput[bl](11.46,0.975){7}
\rput[bl](10.8,2.115){8}
\rput[bl](10.8,2.895){9}
\rput[bl](11.32,4.055){10}
\psdots[linecolor=black, dotsize=0.2](11.56,3.835)
\psdots[linecolor=black, dotsize=0.2](12.36,3.835)
\psdots[linecolor=black, dotsize=0.2](13.16,3.835)
\psdots[linecolor=black, dotsize=0.2](13.56,3.035)
\psdots[linecolor=black, dotsize=0.2](13.56,2.235)
\psdots[linecolor=black, dotsize=0.2](13.16,1.435)
\psdots[linecolor=black, dotsize=0.2](12.36,1.435)
\psdots[linecolor=black, dotsize=0.2](11.56,1.435)
\psdots[linecolor=black, dotsize=0.2](11.16,2.235)
\psdots[linecolor=black, dotsize=0.2](11.16,3.035)
\rput[bl](15.88,4.055){1}
\rput[bl](16.66,4.035){2}
\rput[bl](17.36,2.935){3}
\rput[bl](16.7,0.955){5}
\rput[bl](15.86,0.955){6}
\rput[bl](17.36,2.115){4}
\rput[bl](15.06,0.975){7}
\rput[bl](14.4,2.115){8}
\rput[bl](14.4,2.895){9}
\rput[bl](14.92,4.055){10}
\psdots[linecolor=black, dotsize=0.2](15.16,3.835)
\psdots[linecolor=black, dotsize=0.2](15.96,3.835)
\psdots[linecolor=black, dotsize=0.2](16.76,3.835)
\psdots[linecolor=black, dotsize=0.2](17.16,3.035)
\psdots[linecolor=black, dotsize=0.2](17.16,2.235)
\psdots[linecolor=black, dotsize=0.2](16.76,1.435)
\psdots[linecolor=black, dotsize=0.2](15.96,1.435)
\psdots[linecolor=black, dotsize=0.2](15.16,1.435)
\psdots[linecolor=black, dotsize=0.2](14.76,2.235)
\psdots[linecolor=black, dotsize=0.2](14.76,3.035)
\psline[linecolor=black, linewidth=0.04](15.16,3.835)(14.76,3.035)(14.76,2.235)(15.16,1.435)(15.16,1.435)
\psline[linecolor=black, linewidth=0.04](16.76,3.835)(17.16,3.035)(17.16,2.235)(16.76,1.435)(16.76,1.435)
\psline[linecolor=black, linewidth=0.04](15.16,3.835)(16.76,3.835)(16.76,3.835)
\psline[linecolor=black, linewidth=0.04](15.16,1.435)(16.76,1.435)(16.76,1.435)
\rput[bl](19.48,4.055){1}
\rput[bl](20.26,4.035){2}
\rput[bl](20.96,2.935){3}
\rput[bl](20.3,0.955){5}
\rput[bl](19.46,0.955){6}
\rput[bl](20.96,2.115){4}
\rput[bl](18.66,0.975){7}
\rput[bl](18.0,2.115){8}
\rput[bl](18.0,2.895){9}
\rput[bl](18.52,4.055){10}
\psdots[linecolor=black, dotsize=0.2](18.76,3.835)
\psdots[linecolor=black, dotsize=0.2](19.56,3.835)
\psdots[linecolor=black, dotsize=0.2](20.36,3.835)
\psdots[linecolor=black, dotsize=0.2](20.76,3.035)
\psdots[linecolor=black, dotsize=0.2](20.76,2.235)
\psdots[linecolor=black, dotsize=0.2](20.36,1.435)
\psdots[linecolor=black, dotsize=0.2](19.56,1.435)
\psdots[linecolor=black, dotsize=0.2](18.76,1.435)
\psdots[linecolor=black, dotsize=0.2](18.36,2.235)
\psdots[linecolor=black, dotsize=0.2](18.36,3.035)
\psline[linecolor=black, linewidth=0.04](18.76,3.835)(18.36,3.035)(18.36,2.235)(18.76,1.435)(18.76,1.435)
\psline[linecolor=black, linewidth=0.04](20.36,3.835)(20.76,3.035)(20.76,2.235)(20.36,1.435)(20.36,1.435)
\psline[linecolor=black, linewidth=0.04](18.76,3.835)(20.36,3.835)(20.36,3.835)
\psline[linecolor=black, linewidth=0.04](18.76,1.435)(20.36,1.435)(20.36,1.435)
\rput[bl](1.48,-1.145){1}
\rput[bl](2.26,-1.165){2}
\rput[bl](2.96,-2.265){3}
\rput[bl](2.3,-4.245){5}
\rput[bl](1.46,-4.245){6}
\rput[bl](2.96,-3.085){4}
\rput[bl](0.66,-4.225){7}
\rput[bl](0.0,-3.085){8}
\rput[bl](0.0,-2.305){9}
\rput[bl](0.52,-1.145){10}
\psdots[linecolor=black, dotsize=0.2](0.76,-1.365)
\psdots[linecolor=black, dotsize=0.2](1.56,-1.365)
\psdots[linecolor=black, dotsize=0.2](2.36,-1.365)
\psdots[linecolor=black, dotsize=0.2](2.76,-2.165)
\psdots[linecolor=black, dotsize=0.2](2.76,-2.965)
\psdots[linecolor=black, dotsize=0.2](2.36,-3.765)
\psdots[linecolor=black, dotsize=0.2](1.56,-3.765)
\psdots[linecolor=black, dotsize=0.2](0.76,-3.765)
\psdots[linecolor=black, dotsize=0.2](0.36,-2.965)
\psdots[linecolor=black, dotsize=0.2](0.36,-2.165)
\psline[linecolor=black, linewidth=0.04](0.76,-1.365)(0.36,-2.165)(0.36,-2.965)(0.76,-3.765)(0.76,-3.765)
\psline[linecolor=black, linewidth=0.04](2.36,-1.365)(2.76,-2.165)(2.76,-2.965)(2.36,-3.765)(2.36,-3.765)
\psline[linecolor=black, linewidth=0.04](0.76,-1.365)(2.36,-1.365)(2.36,-1.365)
\psline[linecolor=black, linewidth=0.04](0.76,-3.765)(2.36,-3.765)(2.36,-3.765)
\rput[bl](5.08,-1.145){1}
\rput[bl](5.86,-1.165){2}
\rput[bl](6.56,-2.265){3}
\rput[bl](5.9,-4.245){5}
\rput[bl](5.06,-4.245){6}
\rput[bl](6.56,-3.085){4}
\rput[bl](4.26,-4.225){7}
\rput[bl](3.6,-3.085){8}
\rput[bl](3.6,-2.305){9}
\rput[bl](4.12,-1.145){10}
\psdots[linecolor=black, dotsize=0.2](4.36,-1.365)
\psdots[linecolor=black, dotsize=0.2](5.16,-1.365)
\psdots[linecolor=black, dotsize=0.2](5.96,-1.365)
\psdots[linecolor=black, dotsize=0.2](6.36,-2.165)
\psdots[linecolor=black, dotsize=0.2](6.36,-2.965)
\psdots[linecolor=black, dotsize=0.2](5.96,-3.765)
\psdots[linecolor=black, dotsize=0.2](5.16,-3.765)
\psdots[linecolor=black, dotsize=0.2](4.36,-3.765)
\psdots[linecolor=black, dotsize=0.2](3.96,-2.965)
\psdots[linecolor=black, dotsize=0.2](3.96,-2.165)
\psline[linecolor=black, linewidth=0.04](4.36,-1.365)(3.96,-2.165)(3.96,-2.965)(4.36,-3.765)(4.36,-3.765)
\psline[linecolor=black, linewidth=0.04](5.96,-1.365)(6.36,-2.165)(6.36,-2.965)(5.96,-3.765)(5.96,-3.765)
\psline[linecolor=black, linewidth=0.04](4.36,-1.365)(5.96,-1.365)(5.96,-1.365)
\psline[linecolor=black, linewidth=0.04](4.36,-3.765)(5.96,-3.765)(5.96,-3.765)
\rput[bl](8.68,-1.145){1}
\rput[bl](9.46,-1.165){2}
\rput[bl](10.16,-2.265){3}
\rput[bl](9.5,-4.245){5}
\rput[bl](8.66,-4.245){6}
\rput[bl](10.16,-3.085){4}
\rput[bl](7.86,-4.225){7}
\rput[bl](7.2,-3.085){8}
\rput[bl](7.2,-2.305){9}
\rput[bl](7.72,-1.145){10}
\psdots[linecolor=black, dotsize=0.2](7.96,-1.365)
\psdots[linecolor=black, dotsize=0.2](8.76,-1.365)
\psdots[linecolor=black, dotsize=0.2](9.56,-1.365)
\psdots[linecolor=black, dotsize=0.2](9.96,-2.165)
\psdots[linecolor=black, dotsize=0.2](9.96,-2.965)
\psdots[linecolor=black, dotsize=0.2](9.56,-3.765)
\psdots[linecolor=black, dotsize=0.2](8.76,-3.765)
\psdots[linecolor=black, dotsize=0.2](7.96,-3.765)
\psdots[linecolor=black, dotsize=0.2](7.56,-2.965)
\psdots[linecolor=black, dotsize=0.2](7.56,-2.165)
\psline[linecolor=black, linewidth=0.04](7.96,-1.365)(7.56,-2.165)(7.56,-2.965)(7.96,-3.765)(7.96,-3.765)
\psline[linecolor=black, linewidth=0.04](9.56,-1.365)(9.96,-2.165)(9.96,-2.965)(9.56,-3.765)(9.56,-3.765)
\psline[linecolor=black, linewidth=0.04](7.96,-1.365)(9.56,-1.365)(9.56,-1.365)
\psline[linecolor=black, linewidth=0.04](7.96,-3.765)(9.56,-3.765)(9.56,-3.765)
\rput[bl](12.28,-1.145){1}
\rput[bl](13.06,-1.165){2}
\rput[bl](13.76,-2.265){3}
\rput[bl](13.1,-4.245){5}
\rput[bl](12.26,-4.245){6}
\rput[bl](13.76,-3.085){4}
\rput[bl](11.46,-4.225){7}
\rput[bl](10.8,-3.085){8}
\rput[bl](10.8,-2.305){9}
\rput[bl](11.32,-1.145){10}
\psdots[linecolor=black, dotsize=0.2](11.56,-1.365)
\psdots[linecolor=black, dotsize=0.2](12.36,-1.365)
\psdots[linecolor=black, dotsize=0.2](13.16,-1.365)
\psdots[linecolor=black, dotsize=0.2](13.56,-2.165)
\psdots[linecolor=black, dotsize=0.2](13.56,-2.965)
\psdots[linecolor=black, dotsize=0.2](13.16,-3.765)
\psdots[linecolor=black, dotsize=0.2](12.36,-3.765)
\psdots[linecolor=black, dotsize=0.2](11.56,-3.765)
\psdots[linecolor=black, dotsize=0.2](11.16,-2.965)
\psdots[linecolor=black, dotsize=0.2](11.16,-2.165)
\psline[linecolor=black, linewidth=0.04](11.56,-1.365)(11.16,-2.165)(11.16,-2.965)(11.56,-3.765)(11.56,-3.765)
\psline[linecolor=black, linewidth=0.04](13.16,-1.365)(13.56,-2.165)(13.56,-2.965)(13.16,-3.765)(13.16,-3.765)
\psline[linecolor=black, linewidth=0.04](11.56,-1.365)(13.16,-1.365)(13.16,-1.365)
\psline[linecolor=black, linewidth=0.04](11.56,-3.765)(13.16,-3.765)(13.16,-3.765)
\rput[bl](15.88,-1.145){1}
\rput[bl](16.66,-1.165){2}
\rput[bl](17.36,-2.265){3}
\rput[bl](16.7,-4.245){5}
\rput[bl](15.86,-4.245){6}
\rput[bl](17.36,-3.085){4}
\rput[bl](15.06,-4.225){7}
\rput[bl](14.4,-3.085){8}
\rput[bl](14.4,-2.305){9}
\rput[bl](14.92,-1.145){10}
\psdots[linecolor=black, dotsize=0.2](15.16,-1.365)
\psdots[linecolor=black, dotsize=0.2](15.96,-1.365)
\psdots[linecolor=black, dotsize=0.2](16.76,-1.365)
\psdots[linecolor=black, dotsize=0.2](17.16,-2.165)
\psdots[linecolor=black, dotsize=0.2](17.16,-2.965)
\psdots[linecolor=black, dotsize=0.2](16.76,-3.765)
\psdots[linecolor=black, dotsize=0.2](15.96,-3.765)
\psdots[linecolor=black, dotsize=0.2](15.16,-3.765)
\psdots[linecolor=black, dotsize=0.2](14.76,-2.965)
\psdots[linecolor=black, dotsize=0.2](14.76,-2.165)
\rput[bl](19.48,-1.145){1}
\rput[bl](20.26,-1.165){2}
\rput[bl](20.96,-2.265){3}
\rput[bl](20.3,-4.245){5}
\rput[bl](19.46,-4.245){6}
\rput[bl](20.96,-3.085){4}
\rput[bl](18.66,-4.225){7}
\rput[bl](18.0,-3.085){8}
\rput[bl](18.0,-2.305){9}
\rput[bl](18.52,-1.145){10}
\psdots[linecolor=black, dotsize=0.2](18.76,-1.365)
\psdots[linecolor=black, dotsize=0.2](19.56,-1.365)
\psdots[linecolor=black, dotsize=0.2](20.36,-1.365)
\psdots[linecolor=black, dotsize=0.2](20.76,-2.165)
\psdots[linecolor=black, dotsize=0.2](20.76,-2.965)
\psdots[linecolor=black, dotsize=0.2](20.36,-3.765)
\psdots[linecolor=black, dotsize=0.2](19.56,-3.765)
\psdots[linecolor=black, dotsize=0.2](18.76,-3.765)
\psdots[linecolor=black, dotsize=0.2](18.36,-2.965)
\psdots[linecolor=black, dotsize=0.2](18.36,-2.165)
\psline[linecolor=black, linewidth=0.04](18.76,-1.365)(18.36,-2.165)(18.36,-2.965)(18.76,-3.765)(18.76,-3.765)
\psline[linecolor=black, linewidth=0.04](20.36,-1.365)(20.76,-2.165)(20.76,-2.965)(20.36,-3.765)(20.36,-3.765)
\psline[linecolor=black, linewidth=0.04](18.76,-3.765)(20.36,-3.765)(20.36,-3.765)
\rput[bl](4.68,-6.345){1}
\rput[bl](5.46,-6.365){2}
\rput[bl](6.16,-7.465){3}
\rput[bl](5.5,-9.445){5}
\rput[bl](4.66,-9.445){6}
\rput[bl](6.16,-8.285){4}
\rput[bl](3.86,-9.425){7}
\rput[bl](3.2,-8.285){8}
\rput[bl](3.2,-7.505){9}
\rput[bl](3.72,-6.345){10}
\psdots[linecolor=black, dotsize=0.2](3.96,-6.565)
\psdots[linecolor=black, dotsize=0.2](4.76,-6.565)
\psdots[linecolor=black, dotsize=0.2](5.56,-6.565)
\psdots[linecolor=black, dotsize=0.2](5.96,-7.365)
\psdots[linecolor=black, dotsize=0.2](5.96,-8.165)
\psdots[linecolor=black, dotsize=0.2](5.56,-8.965)
\psdots[linecolor=black, dotsize=0.2](4.76,-8.965)
\psdots[linecolor=black, dotsize=0.2](3.96,-8.965)
\psdots[linecolor=black, dotsize=0.2](3.56,-8.165)
\psdots[linecolor=black, dotsize=0.2](3.56,-7.365)
\psline[linecolor=black, linewidth=0.04](3.96,-6.565)(3.56,-7.365)(3.56,-8.165)(3.96,-8.965)(3.96,-8.965)
\psline[linecolor=black, linewidth=0.04](5.56,-6.565)(5.96,-7.365)(5.96,-8.165)(5.56,-8.965)(5.56,-8.965)
\psline[linecolor=black, linewidth=0.04](3.96,-6.565)(5.56,-6.565)(5.56,-6.565)
\psline[linecolor=black, linewidth=0.04](3.96,-8.965)(5.56,-8.965)(5.56,-8.965)
\rput[bl](9.88,-6.345){1}
\rput[bl](10.66,-7.165){2}
\rput[bl](10.62,-8.625){3}
\rput[bl](8.8,-8.265){5}
\rput[bl](8.8,-7.505){6}
\psrotate(9.82, -9.425){-0.37204528}{\rput[bl](9.82,-9.425){4}}
\rput[bl](11.04,-7.165){7}
\rput[bl](11.84,-7.145){8}
\rput[bl](11.86,-8.625){9}
\rput[bl](11.02,-8.585){10}
\rput[bl](1.4,0.255){\large{$G_7$}}
\rput[bl](5.0,0.255){\large{$G_8$}}
\rput[bl](8.6,0.255){\large{$G_9$}}
\rput[bl](12.2,0.255){\large{$G_{10}$}}
\rput[bl](15.8,0.255){\large{$G_{11}$}}
\rput[bl](19.4,0.255){\large{$G_{12}$}}
\rput[bl](1.4,-4.945){\large{$G_{13}$}}
\rput[bl](5.0,-4.945){\large{$G_{14}$}}
\rput[bl](8.6,-4.945){\large{$G_{15}$}}
\rput[bl](12.2,-4.945){\large{$G_{16}$}}
\rput[bl](15.8,-4.945){\large{$G_{17}$}}
\rput[bl](19.4,-4.945){\large{$G_{18}$}}
\rput[bl](4.6,-10.145){\large{$G_{19}$}}
\rput[bl](10.2,-10.145){\large{$G_{20}$}}
\rput[bl](15.8,-10.145){\large{$G_{21}$}}
\psline[linecolor=black, linewidth=0.04](1.56,9.035)(1.56,6.635)(1.56,6.635)
\psline[linecolor=black, linewidth=0.04](2.36,9.035)(2.76,7.435)(2.76,7.435)
\psline[linecolor=black, linewidth=0.04](2.76,8.235)(2.36,6.635)(2.36,6.635)
\psline[linecolor=black, linewidth=0.04](0.76,9.035)(0.36,7.435)(0.36,7.435)
\psline[linecolor=black, linewidth=0.04](0.36,8.235)(0.76,6.635)(0.76,6.635)
\psline[linecolor=black, linewidth=0.04](5.16,9.035)(5.16,6.635)
\psline[linecolor=black, linewidth=0.04](5.96,9.035)(3.96,8.235)(3.96,8.235)
\psline[linecolor=black, linewidth=0.04](4.36,9.035)(3.96,7.435)(3.96,7.435)
\psline[linecolor=black, linewidth=0.04](6.36,8.235)(5.96,6.635)(5.96,6.635)
\psline[linecolor=black, linewidth=0.04](4.36,6.635)(6.36,7.435)(6.36,7.435)
\psbezier[linecolor=black, linewidth=0.04](7.96,9.035)(7.96,8.235)(9.56,8.235)(9.56,9.035)
\psline[linecolor=black, linewidth=0.04](7.56,8.235)(9.96,8.235)(9.96,8.235)
\psline[linecolor=black, linewidth=0.04](8.76,9.035)(9.96,7.435)(9.96,7.435)
\psline[linecolor=black, linewidth=0.04](8.76,6.635)(7.56,7.435)(7.56,7.435)
\psbezier[linecolor=black, linewidth=0.04](11.56,9.035)(11.56,8.235)(13.16,8.235)(13.16,9.035)
\psline[linecolor=black, linewidth=0.04](12.36,9.035)(13.56,8.235)(13.56,8.235)
\psline[linecolor=black, linewidth=0.04](13.56,7.435)(11.56,6.635)(11.56,6.635)
\psline[linecolor=black, linewidth=0.04](13.16,6.635)(11.16,7.435)(11.16,7.435)
\psline[linecolor=black, linewidth=0.04](12.36,6.635)(11.16,8.235)(11.16,8.235)
\psline[linecolor=black, linewidth=0.04](15.96,9.035)(17.16,8.235)(17.16,8.235)
\psbezier[linecolor=black, linewidth=0.04](15.16,9.035)(15.16,8.235)(16.76,8.235)(16.76,9.035)
\psline[linecolor=black, linewidth=0.04](17.16,7.435)(15.96,6.635)(15.96,6.635)
\psline[linecolor=black, linewidth=0.04](16.76,6.635)(14.76,7.435)(14.76,7.435)
\psline[linecolor=black, linewidth=0.04](15.16,6.635)(14.76,8.235)(14.76,8.235)
\psline[linecolor=black, linewidth=0.04](19.56,9.035)(19.56,6.635)(19.56,6.635)
\psline[linecolor=black, linewidth=0.04](20.36,9.035)(18.76,6.635)(18.76,6.635)
\psline[linecolor=black, linewidth=0.04](20.76,8.235)(18.36,7.435)(18.36,7.435)
\psline[linecolor=black, linewidth=0.04](20.76,7.435)(18.36,8.235)(18.36,8.235)
\psline[linecolor=black, linewidth=0.04](18.76,9.035)(20.36,6.635)(20.36,6.635)
\psline[linecolor=black, linewidth=0.04](1.56,3.835)(1.56,1.435)(1.56,1.435)
\psline[linecolor=black, linewidth=0.04](0.76,1.435)(2.36,3.835)(2.36,3.835)
\psline[linecolor=black, linewidth=0.04](2.36,1.435)(0.76,3.835)(0.76,3.835)
\psline[linecolor=black, linewidth=0.04](0.36,3.035)(2.76,3.035)(2.76,3.035)
\psline[linecolor=black, linewidth=0.04](0.36,2.235)(2.76,2.235)(2.76,2.235)
\psline[linecolor=black, linewidth=0.04](5.16,3.835)(5.16,1.435)(5.16,1.435)
\psline[linecolor=black, linewidth=0.04](5.96,3.835)(3.96,2.235)(3.96,2.235)
\psline[linecolor=black, linewidth=0.04](6.36,3.035)(4.36,1.435)(4.36,1.435)
\psline[linecolor=black, linewidth=0.04](6.36,2.235)(4.36,3.835)(4.36,3.835)
\psline[linecolor=black, linewidth=0.04](5.96,1.435)(3.96,3.035)(3.96,3.035)
\psline[linecolor=black, linewidth=0.04](8.76,3.835)(8.76,1.435)(8.76,1.435)
\psbezier[linecolor=black, linewidth=0.04](7.96,3.835)(7.96,3.035)(9.56,3.035)(9.56,3.835)
\psline[linecolor=black, linewidth=0.04](7.96,1.435)(9.96,3.035)(9.96,3.035)
\psline[linecolor=black, linewidth=0.04](9.96,2.235)(7.56,2.235)(7.56,2.235)
\psline[linecolor=black, linewidth=0.04](9.56,1.435)(7.56,3.035)(7.56,3.035)
\psline[linecolor=black, linewidth=0.04](12.36,3.835)(13.16,3.835)(13.16,3.835)
\psline[linecolor=black, linewidth=0.04](12.36,3.835)(12.36,1.435)(12.36,1.435)
\psline[linecolor=black, linewidth=0.04](12.36,3.835)(13.16,1.435)(13.16,1.435)
\psline[linecolor=black, linewidth=0.04](13.16,3.835)(13.56,3.035)(13.56,3.035)
\psline[linecolor=black, linewidth=0.04](13.16,3.835)(11.56,1.435)(11.56,1.435)
\psline[linecolor=black, linewidth=0.04](13.56,3.035)(13.56,2.235)(13.56,2.235)
\psline[linecolor=black, linewidth=0.04](13.56,3.035)(11.16,2.235)(11.16,2.235)
\psline[linecolor=black, linewidth=0.04](13.56,2.235)(13.16,1.435)(13.16,1.435)
\psline[linecolor=black, linewidth=0.04](13.56,2.235)(11.16,3.035)(11.16,3.035)
\psline[linecolor=black, linewidth=0.04](13.16,1.435)(11.56,3.835)(11.56,3.835)
\psline[linecolor=black, linewidth=0.04](12.36,1.435)(11.56,1.435)(11.56,1.435)
\psline[linecolor=black, linewidth=0.04](12.36,1.435)(11.56,3.835)(11.56,3.835)
\psline[linecolor=black, linewidth=0.04](11.56,1.435)(11.16,2.235)(11.16,2.235)
\psline[linecolor=black, linewidth=0.04](11.16,2.235)(11.16,3.035)(11.16,3.035)
\psline[linecolor=black, linewidth=0.04](11.16,3.035)(11.56,3.835)(11.56,3.835)
\psline[linecolor=black, linewidth=0.04](15.96,3.835)(15.96,1.435)(15.96,1.435)
\psbezier[linecolor=black, linewidth=0.04](15.16,3.835)(15.16,3.035)(16.76,3.035)(16.76,3.835)
\psline[linecolor=black, linewidth=0.04](17.16,3.035)(14.76,3.035)(14.76,3.035)
\psline[linecolor=black, linewidth=0.04](17.16,2.235)(14.76,2.235)(14.76,2.235)
\psbezier[linecolor=black, linewidth=0.04](7.96,6.635)(7.96,7.435)(9.56,7.435)(9.56,6.635)
\psbezier[linecolor=black, linewidth=0.04](15.16,1.435)(15.16,2.235)(16.76,2.235)(16.76,1.435)
\psline[linecolor=black, linewidth=0.04](19.56,3.835)(19.56,1.435)(19.56,1.435)
\psline[linecolor=black, linewidth=0.04](20.36,3.835)(20.76,2.235)(20.76,2.235)
\psline[linecolor=black, linewidth=0.04](20.76,3.035)(18.76,1.435)(18.76,1.435)
\psline[linecolor=black, linewidth=0.04](20.36,1.435)(18.36,3.035)(18.36,3.035)
\psline[linecolor=black, linewidth=0.04](18.76,3.835)(18.36,2.235)(18.36,2.235)
\psline[linecolor=black, linewidth=0.04](1.56,-1.365)(1.56,-3.765)(1.56,-3.765)
\psline[linecolor=black, linewidth=0.04](2.36,-3.765)(0.36,-2.965)(0.36,-2.965)
\psline[linecolor=black, linewidth=0.04](0.76,-3.765)(2.76,-2.965)(2.76,-2.965)
\psline[linecolor=black, linewidth=0.04](0.36,-2.165)(2.76,-2.165)(2.36,-2.165)
\psbezier[linecolor=black, linewidth=0.04](0.76,-1.365)(0.76,-2.165)(2.36,-2.165)(2.36,-1.365)
\psline[linecolor=black, linewidth=0.04](5.16,-1.365)(5.16,-3.765)(5.16,-3.765)
\psline[linecolor=black, linewidth=0.04](6.36,-2.165)(5.96,-3.765)(5.96,-3.765)
\psline[linecolor=black, linewidth=0.04](3.96,-2.165)(4.36,-3.765)(4.36,-3.765)
\psline[linecolor=black, linewidth=0.04](3.96,-2.965)(6.36,-2.965)(6.36,-2.965)
\psbezier[linecolor=black, linewidth=0.04](4.36,-1.365)(4.36,-2.165)(5.96,-2.165)(5.96,-1.365)
\psline[linecolor=black, linewidth=0.04](8.76,-1.365)(8.76,-3.765)(8.76,-3.765)
\psline[linecolor=black, linewidth=0.04](9.56,-1.365)(9.96,-2.965)(9.96,-2.965)
\psline[linecolor=black, linewidth=0.04](9.96,-2.165)(7.56,-2.165)(7.56,-2.165)
\psline[linecolor=black, linewidth=0.04](7.96,-1.365)(7.96,-3.765)(7.96,-3.765)
\psline[linecolor=black, linewidth=0.04](7.56,-2.965)(9.56,-3.765)(9.56,-3.765)
\psline[linecolor=black, linewidth=0.04](12.36,-1.365)(12.36,-3.765)(12.36,-3.765)
\psline[linecolor=black, linewidth=0.04](13.16,-1.365)(11.16,-2.165)(11.16,-2.165)
\psline[linecolor=black, linewidth=0.04](11.56,-1.365)(13.56,-2.165)(13.56,-2.165)
\psline[linecolor=black, linewidth=0.04](13.56,-2.965)(11.56,-3.765)(11.56,-3.765)
\psline[linecolor=black, linewidth=0.04](13.16,-3.765)(11.16,-2.965)(11.16,-2.965)
\psline[linecolor=black, linewidth=0.04](15.96,-1.365)(16.76,-1.365)(16.76,-1.365)
\psline[linecolor=black, linewidth=0.04](15.96,-1.365)(16.76,-3.765)(16.76,-3.765)
\psline[linecolor=black, linewidth=0.04](15.96,-1.365)(15.96,-3.765)(15.96,-3.765)
\psline[linecolor=black, linewidth=0.04](16.76,-1.365)(17.16,-2.165)(17.16,-2.165)
\psline[linecolor=black, linewidth=0.04](16.76,-1.365)(15.16,-3.765)(15.16,-3.765)
\psline[linecolor=black, linewidth=0.04](17.16,-2.165)(17.16,-2.965)(17.16,-2.965)
\psline[linecolor=black, linewidth=0.04](17.16,-2.165)(14.76,-2.965)(14.76,-2.965)
\psline[linecolor=black, linewidth=0.04](17.16,-2.965)(16.76,-3.765)(16.76,-3.765)
\psline[linecolor=black, linewidth=0.04](17.16,-2.965)(14.76,-2.165)(14.76,-2.165)
\psline[linecolor=black, linewidth=0.04](16.76,-3.765)(15.16,-1.365)(15.16,-1.365)
\psline[linecolor=black, linewidth=0.04](15.96,-3.765)(14.76,-2.965)(14.76,-2.965)
\psline[linecolor=black, linewidth=0.04](15.96,-3.765)(14.76,-2.165)(14.76,-2.165)
\psline[linecolor=black, linewidth=0.04](15.16,-3.765)(14.76,-2.165)(14.76,-2.165)
\psline[linecolor=black, linewidth=0.04](15.16,-3.765)(15.16,-1.365)(15.16,-1.365)
\psline[linecolor=black, linewidth=0.04](14.76,-2.965)(15.16,-1.365)(15.16,-1.365)
\psline[linecolor=black, linewidth=0.04](19.56,-1.365)(20.36,-1.365)(20.36,-1.365)
\psline[linecolor=black, linewidth=0.04](19.56,-1.365)(20.76,-2.165)(20.76,-2.165)
\psline[linecolor=black, linewidth=0.04](19.56,-1.365)(20.36,-3.765)(20.36,-3.765)
\psline[linecolor=black, linewidth=0.04](20.36,-1.365)(20.76,-2.965)(20.76,-2.965)
\psline[linecolor=black, linewidth=0.04](19.56,-3.765)(18.76,-1.365)(18.76,-1.365)
\psline[linecolor=black, linewidth=0.04](18.76,-1.365)(18.36,-2.965)(18.36,-2.965)
\psline[linecolor=black, linewidth=0.04](18.36,-2.165)(18.76,-3.765)(18.76,-3.765)
\psline[linecolor=black, linewidth=0.04](4.76,-6.565)(4.76,-8.965)(4.76,-8.965)
\psline[linecolor=black, linewidth=0.04](5.96,-7.365)(3.56,-8.165)(3.56,-8.165)
\psline[linecolor=black, linewidth=0.04](5.96,-8.165)(3.56,-7.365)(3.56,-7.365)
\psbezier[linecolor=black, linewidth=0.04](3.96,-6.565)(3.96,-7.365)(5.56,-7.365)(5.56,-6.565)
\psbezier[linecolor=black, linewidth=0.04](3.96,-8.965)(3.96,-8.165)(5.56,-8.165)(5.56,-8.965)
\psline[linecolor=black, linewidth=0.04](9.96,-8.965)(9.16,-8.165)(9.16,-7.365)(9.96,-6.565)(10.76,-7.365)(10.76,-8.165)(9.96,-8.965)(9.96,-8.965)
\psline[linecolor=black, linewidth=0.04](11.16,-8.165)(11.16,-7.365)(11.96,-7.365)(11.96,-8.165)(11.16,-8.165)(11.16,-8.165)
\psline[linecolor=black, linewidth=0.04](11.16,-7.365)(11.96,-8.165)(11.96,-8.165)
\psline[linecolor=black, linewidth=0.04](11.96,-7.365)(11.16,-8.165)(11.16,-8.165)
\psdots[linecolor=black, dotsize=0.2](9.96,-6.565)
\psdots[linecolor=black, dotsize=0.2](10.76,-7.365)
\psdots[linecolor=black, dotsize=0.2](10.76,-8.165)
\psdots[linecolor=black, dotsize=0.2](9.96,-8.965)
\psdots[linecolor=black, dotsize=0.2](9.16,-8.165)
\psdots[linecolor=black, dotsize=0.2](9.16,-7.365)
\psdots[linecolor=black, dotsize=0.2](11.16,-7.365)
\psdots[linecolor=black, dotsize=0.2](11.96,-7.365)
\psdots[linecolor=black, dotsize=0.2](11.96,-8.165)
\psdots[linecolor=black, dotsize=0.2](11.16,-8.165)
\rput[bl](15.48,-6.345){1}
\rput[bl](16.26,-7.165){2}
\rput[bl](16.22,-8.625){3}
\rput[bl](14.4,-8.265){5}
\rput[bl](14.4,-7.505){6}
\psrotate(15.42, -9.425){-0.37204528}{\rput[bl](15.42,-9.425){4}}
\rput[bl](16.64,-7.165){7}
\rput[bl](17.44,-7.145){8}
\rput[bl](17.46,-8.625){9}
\rput[bl](16.62,-8.585){10}
\psline[linecolor=black, linewidth=0.04](15.56,-8.965)(14.76,-8.165)(14.76,-7.365)(15.56,-6.565)(16.36,-7.365)(16.36,-8.165)(15.56,-8.965)(15.56,-8.965)
\psline[linecolor=black, linewidth=0.04](16.76,-8.165)(16.76,-7.365)(17.56,-7.365)(17.56,-8.165)(16.76,-8.165)(16.76,-8.165)
\psline[linecolor=black, linewidth=0.04](16.76,-7.365)(17.56,-8.165)(17.56,-8.165)
\psline[linecolor=black, linewidth=0.04](17.56,-7.365)(16.76,-8.165)(16.76,-8.165)
\psdots[linecolor=black, dotsize=0.2](15.56,-6.565)
\psdots[linecolor=black, dotsize=0.2](16.36,-7.365)
\psdots[linecolor=black, dotsize=0.2](16.36,-8.165)
\psdots[linecolor=black, dotsize=0.2](15.56,-8.965)
\psdots[linecolor=black, dotsize=0.2](14.76,-8.165)
\psdots[linecolor=black, dotsize=0.2](14.76,-7.365)
\psdots[linecolor=black, dotsize=0.2](16.76,-7.365)
\psdots[linecolor=black, dotsize=0.2](17.56,-7.365)
\psdots[linecolor=black, dotsize=0.2](17.56,-8.165)
\psdots[linecolor=black, dotsize=0.2](16.76,-8.165)
\psline[linecolor=black, linewidth=0.04](9.96,-6.565)(9.96,-8.965)(9.96,-8.965)
\psline[linecolor=black, linewidth=0.04](9.16,-7.365)(10.76,-7.365)(10.76,-7.365)
\psline[linecolor=black, linewidth=0.04](9.16,-8.165)(10.76,-8.165)(10.76,-8.165)
\psline[linecolor=black, linewidth=0.04](15.56,-6.565)(15.56,-8.965)(15.56,-8.965)
\psline[linecolor=black, linewidth=0.04](16.36,-8.165)(14.76,-7.365)(14.76,-7.365)
\psline[linecolor=black, linewidth=0.04](14.76,-8.165)(16.36,-7.365)(16.36,-7.365)
\end{pspicture}
}
\end{center}
\caption{Cubic graphs of order $10$.}\label{fig:cubic}
	\end{figure}
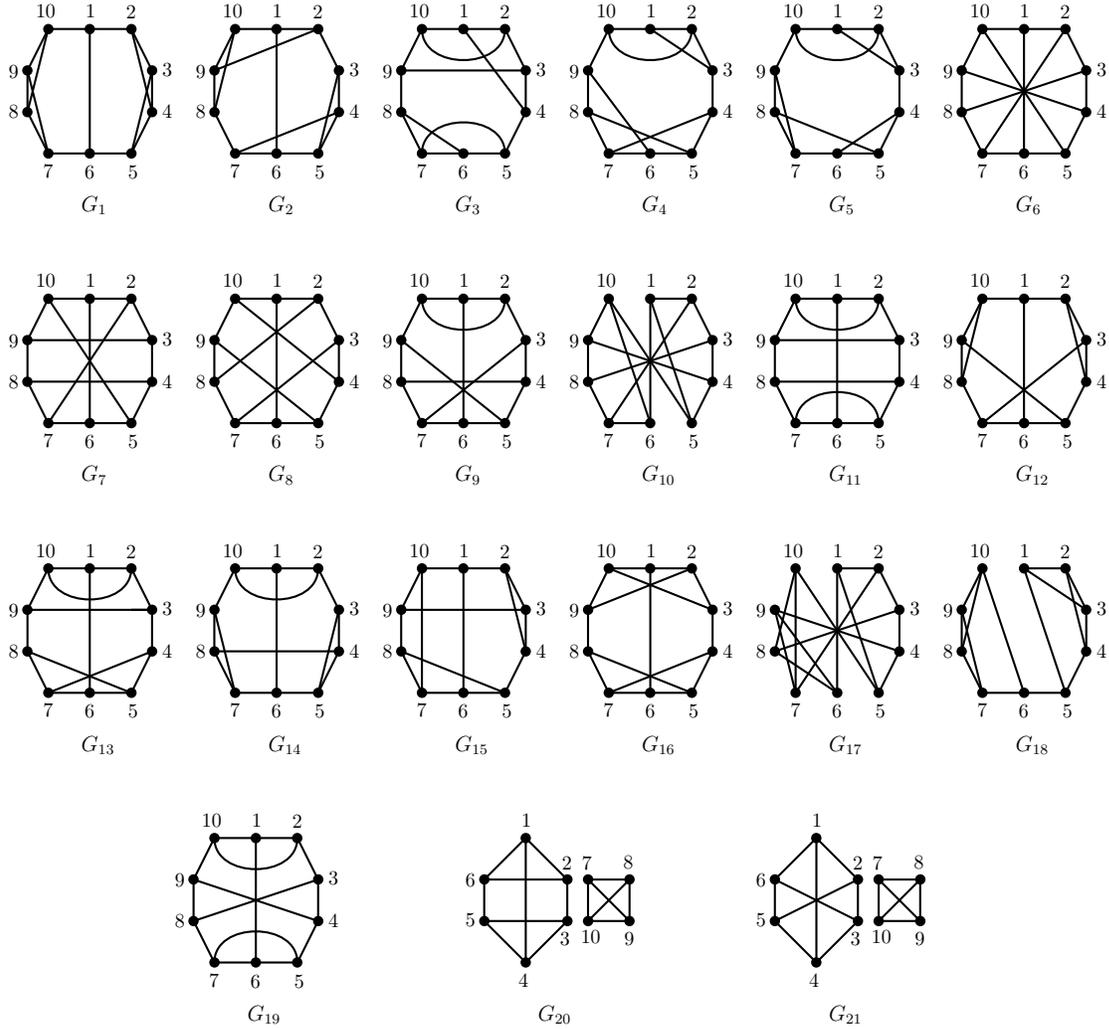

In the following, we consider cubic graphs of order $10$, as we see in Figure \ref{fig:cubic}. Note that $G_{17}=P$.

	\begin{theorem}\label{thm:cubic10}
If $G$ is a cubic graph of order $10$ which is not the Petersen graph, then $\dst(G)=3$.	
	\end{theorem}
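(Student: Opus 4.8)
The plan is to establish the two bounds $\dst(G)\le 3$ and $\dst(G)\ge 3$ by rather different means: the upper bound holds uniformly for \emph{every} cubic graph of order $10$ and follows from a short counting argument, while the lower bound is proved by exhibiting, for each of the twenty non-Petersen graphs in Figure \ref{fig:cubic} (that is, all $G_i$ with $i\neq 17$), an explicit partition of its vertex set into three strong dominating sets.

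For the upper bound I would first invoke Corollary \ref{cor:strong-domatic-min-deg}: as $G$ is $3$-regular we have $\dst(G)=d(G)$, so it suffices to bound the ordinary domatic number. Every closed neighborhood $N[v]$ in a cubic graph has exactly four vertices, so any dominating set $D$ must satisfy $10=|V(G)|\le 4|D|$, whence $|D|\ge 3$ and $\gamma(G)\ge 3$. In a strong domatic partition $\{V_1,\dots,V_k\}$ the classes are disjoint dominating sets whose cardinalities sum to $10$, and since each $|V_i|\ge 3$ we obtain $3k\le 10$, i.e.\ $k\le 3$. Hence $\dst(G)\le 3$ for all cubic graphs of order $10$, the Petersen graph included.

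For the lower bound, the same size count shows that any partition into three dominating sets must follow the pattern $(3,3,4)$: three classes each of size at least $3$ and summing to $10$ force exactly two classes of size $3$ and one of size $4$. Using the common labeling $1,\dots,10$ of the vertices in Figure \ref{fig:cubic}, I would record for each graph $G_i$ (with $i\neq 17$) a concrete partition $\{A_i,B_i,C_i\}$ of this shape and then check that each class dominates the remaining vertices; by regularity a dominating set of $G_i$ is automatically a strong dominating set, so the verification amounts to confirming that every vertex outside a class has a neighbor inside it. Presenting these twenty partitions (most conveniently as a table) supplies $\dst(G_i)\ge 3$, and together with the upper bound gives $\dst(G_i)=3$.

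The main obstacle is precisely this lower bound, since there is no single structural invariant quoted earlier that separates the Petersen graph from the other twenty: one must produce and verify a valid $(3,3,4)$ partition in each case. The contrast with $P$ is instructive. In the proof of Theorem \ref{thm:petersen} one sees that every size-$3$ dominating set of $P$ coincides with the open neighborhood $N(u)$ of some vertex $u$; consequently $u$ can only be dominated from within that single class, and no second disjoint dominating set can cover it, which is exactly why $\dst(P)=2$. For each of the remaining graphs this obstruction disappears, and the work lies in locating a size-$3$ dominating set whose complement still splits into a size-$3$ and a size-$4$ dominating set---routine for any individual graph but requiring a careful, graph-by-graph sweep to cover all twenty.
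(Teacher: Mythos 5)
Your proposal follows essentially the same route as the paper: the paper likewise derives $|D|\ge 3$ for any strong dominating set of a cubic graph on $10$ vertices (hence at most three classes in a partition) and then proves the lower bound by listing, for each $G_i$ with $i\neq 17$, an explicit strong domatic partition of size $3$, all of which indeed have the $(3,3,4)$ shape you predict. The only thing separating your write-up from a complete proof is the table of twenty partitions itself, which is the bulk of the paper's argument and which you correctly identify as the remaining (routine but unavoidable) work.
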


	\begin{proof}
Consider Figure \ref{fig:cubic}. Suppose that $D$ is a strong dominating set of a cubic graph of order $10$. Since each vertex in $D$ strong dominate at most $3$ other vertices,  we need to have $|D|\geq 3$. Now, consider the following sets:

\begin{align*}
P_{1} &= \Bigl\{ \{1,3,9 \},\{2,6,8 \},\{4,5,7,10 \} \Bigl\}, &
P_{2} &= \Bigl\{ \{1,3,8 \},\{2,5,7,10 \},\{4,6,9 \} \Bigl\},\\
P_{3} &= \Bigl\{ \{1,3,6 \},\{2,5,9 \},\{4,7,8,10 \} \Bigl\}, &
P_{4} &= \Bigl\{ \{1,6,7 \},\{2,4,9 \},\{3,5,8,10 \} \Bigl\},\\
P_{5} &= \Bigl\{ \{1,4,9 \},\{2,6,7 \},\{3,5,8,10 \} \Bigl\}, &
P_{6} &= \Bigl\{ \{1,4,7\},\{2,5,8 \},\{3,6,9,10 \} \Bigl\},\\
P_{7} &= \Bigl\{ \{1,3,6,9 \},\{2,5,8 \},\{4,7,10 \} \Bigl\}, &
P_{8} &= \Bigl\{ \{1,4,8 \},\{2,5,7,10 \},\{3,6,9 \} \Bigl\},\\
P_{9} &= \Bigl\{ \{1,4,8,10 \},\{2,5,7 \},\{3,6,9 \} \Bigl\}, &
P_{10} &= \Bigl\{ \{1,8,9 \},\{2,5,7,10 \},\{3,4,6 \} \Bigl\},\\
P_{11} &= \Bigl\{ \{1,4,8 \},\{2,5,7,10 \},\{3,6,9 \} \Bigl\}, &
P_{12} &= \Bigl\{ \{1,3,9 \},\{2,5,7,10 \},\{4,6,8 \} \Bigl\},\\
P_{13} &= \Bigl\{ \{1,4,8 \},\{2,5,7,10 \},\{3,6,9 \} \Bigl\}, &
P_{14} &= \Bigl\{ \{1,4,8,10 \},\{2,5,7 \},\{3,6,9 \} \Bigl\},\\
P_{15} &= \Bigl\{ \{1,4,8 \},\{2,5,7,10 \},\{3,6,9 \} \Bigl\}, &
P_{16} &= \Bigl\{ \{1,4,8 \},\{2,5,7,10 \},\{3,6,9 \} \Bigl\},\\
P_{18} &= \Bigl\{ \{1,4,7,10 \},\{2,5,8 \},\{3,6,9 \} \Bigl\}, &
P_{19} &= \Bigl\{ \{1,4,8 \},\{2,5,7,10 \},\{3,6,9 \} \Bigl\},\\
P_{20} &= \Bigl\{ \{1,3,7 \},\{2,4,8 \},\{5,6,9,10 \} \Bigl\}, & 
P_{21} &= \Bigl\{ \{1,4,7 \},\{2,5,8 \},\{3,6,9,10 \} \Bigl\}.
\end{align*}

One can easily check that $P_i$ is a strong domatic partition of $G_i$, for $1\leq i\leq 21$ and $i\neq 17$. So, we found a strong domatic partition  of size $3$ for each. Therefore we have the result.
\qed
	\end{proof}

As an immediate result of Corollary \ref{cor:strong-domatic-min-deg}, and Theorems \ref{thm:petersen} and \ref{thm:cubic10}, we have the following:

	\begin{corollary}
Domatic number and strong domatic number of the Petersen graph are unique among the cubic graphs of order $10$. 
	\end{corollary}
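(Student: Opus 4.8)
The plan is to combine the three cited results directly, since this corollary is an immediate consequence of them and requires no new construction. First I would invoke Corollary \ref{cor:strong-domatic-min-deg}, which asserts that for any $k$-regular graph the ordinary domatic number and the strong domatic number coincide, that is $d(G)=\dst(G)$. Every cubic graph of order $10$ is $3$-regular, so this identity holds for all twenty-one graphs $G_1,\ldots,G_{21}$ under consideration. This reduces the assertion about both parameters to a single statement about one of them, which I would phrase for the strong domatic number and then transfer back.

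Next I would simply read off the two distinguishing values. By Theorem \ref{thm:petersen} the Petersen graph $P=G_{17}$ satisfies $\dst(P)=2$, and hence $d(P)=2$ as well. By Theorem \ref{thm:cubic10} every cubic graph $G$ of order $10$ other than $P$ satisfies $\dst(G)=3$, and hence $d(G)=3$. Consequently, among all cubic graphs of order $10$, the Petersen graph is the only one whose strong domatic number equals $2$, every other such graph having value $3$; applying the equality $d=\dst$ on this family, it is simultaneously the only one whose ordinary domatic number equals $2$.

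There is essentially no obstacle in this final step: the genuine work has already been carried out in Theorems \ref{thm:petersen} and \ref{thm:cubic10}, which pin down the separating values, and in Corollary \ref{cor:strong-domatic-min-deg}, which lets the strong-domatic conclusion be promoted to the ordinary domatic number at no cost. The only point to check is that ``unique'' is understood in the sense that no other cubic graph of order $10$ shares these values, and this is immediate since $2\neq 3$. I would close by remarking that each of the two parameters therefore acts as a complete invariant separating $P$ from the rest of this family of twenty-one graphs.
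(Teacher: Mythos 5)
Your argument is exactly the paper's: the corollary is stated there as an immediate consequence of Corollary \ref{cor:strong-domatic-min-deg}, Theorem \ref{thm:petersen} and Theorem \ref{thm:cubic10}, and you combine precisely these three results in the same way (using $d(G)=\dst(G)$ for regular graphs to transfer the separation $2\neq 3$ from the strong domatic number to the domatic number). Correct, and no further comment is needed.
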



\end{document}